\newtheorem{theorem}{Theorem}[section]
\newtheorem{proposition}[theorem]{Proposition}
\newtheorem{corollary}[theorem]{Corollary}
\newtheorem{question}[theorem]{Question}
\theoremstyle{definition}
\newtheorem{definition}[theorem]{Definition}
\newtheorem{example}[theorem]{Example}
\theoremstyle{remark}
\newtheorem{remark}[theorem]{Remark}
\numberwithin{equation}{section}
\newcommand{\Z}{{\mathbb Z}}
\newcommand{\C}{{\mathbb C}}
\newcommand{\Q}{{\mathbb Q}}
\newcommand{\F}{{\mathbb F}}
\newcommand{\bbF}{\mathbb{F}}
\newcommand{\cA}{\mathcal{A}}
\newcommand{\cP}{\mathcal{P}}
\newcommand{\frp}{\mathfrak{p}}
\newcommand{\bP}{\boldsymbol{P}}
\newcommand{\ba}{\boldsymbol{a}}
\newcommand{\e}{\mathbf{e}}
\newcommand{\f}{\mathbf{f}}
\newcommand{\ev}{\mathrm{ev}}
\newcommand{\id}{\mathrm{id}}
\DeclareMathOperator{\Gal}{Gal}
\DeclareMathOperator{\Fun}{Fun}
\DeclareMathOperator{\Rec}{Rec}
\DeclareMathOperator{\disc}{disc}
\DeclareMathOperator{\Tr}{Tr}
\DeclareMathOperator{\Spl}{Spl}
\title[Finite algebraic numbers and the law of decomposition of primes]%
{The ring of finite algebraic numbers and its application to the law of decomposition of primes}
\author[J.~Rosen]{Julian Rosen}
\address[J.~Rosen]{}
\email{julianrosen@gmail.com}
\author[Y.~Takeyama]{Yoshihiro Takeyama}
\address[Y.~Takeyama]{Department of Mathematics\\
	Institute of Pure and Applied Sciences\\
	University of Tsukuba\\
	Tsukuba, Ibaraki 305-8571\\
	Japan}
\email{takeyama@math.tsukuba.ac.jp}
\author[K.~Tasaka]{Koji Tasaka}
\address[K.~Tasaka]{Department of Information Science and Technology\\
	Aichi Prefectural University\\
	Nagakute-city, Aichi, 480-1198\\
	Japan}
\email{tasaka@ist.aichi-pu.ac.jp}
\author[S.~Yamamoto]{Shuji Yamamoto}
\address[S.~Yamamoto]{Department of Mathematics, Faculty of Science and Technology\\
    Keio University\\
    3-14-1 Hiyoshi, Kouhoku-ku, Yokohama 223-8522\\
    Japan}
\email{yamashu@math.keio.ac.jp}
\keywords{Algebraic numbers, Linear recurrent sequence modulo primes, Mod $p$ congruences of Fibonacci numbers}
\begin{document}

\begin{abstract}
In this paper, we develop an explicit method to express finite algebraic numbers 
(in particular, certain idempotents among them) in terms of linear recurrent sequences, 
and give applications to the characterization of the splitting primes in a given finite Galois extension over the rational field. 
\end{abstract}

\maketitle

\section{Introduction}

\subsection{Splitting of primes}
We denote by $\bP_\Q$ the set of all rational prime numbers. 
Let $L$ be a finite Galois extension over $\Q$ (contained in a fixed algebraic closure $\overline{\Q}$ of $\Q$) 
and $O_L$ the ring of integers in $L$.
For $p\in \bP_\Q$, the ideal of $O_L$ generated by $p$ has a unique factorization into prime ideals: 
\begin{equation}\label{eq:prime_decom}
pO_L = \prod_{i=1}^r\mathfrak{p}_i^{e_i},
\end{equation} 
where $\mathfrak{p}_1,\ldots ,\mathfrak{p}_r$ are distinct prime ideals of $O_L$. 
Since $L/\Q$ is Galois, we have $e_1=\cdots=e_r=:\e$ and 
$[O_L/\mathfrak{p}_1:\F_p]=\cdots=[O_L/\mathfrak{p}_r:\F_p]=:\f$, where $\F_p=\Z/p\Z$.
The prime number $p$ is said to \emph{split completely} in $L$ if $\e=1$ and $\f=1$.
Determining the set 
\[\Spl(L):=\{p\in \bP_\Q \mid p\ \mbox{splits completely in $L$}\}\]
is an important theme in algebraic number theory; 
in fact, it is known that $L$ is determined uniquely by the set $\Spl(L)$. 
More precisely, 
if $M$ is another finite Galois extension over $\Q$ (in the same algebraic closure $\overline{\Q}$) 
and $\Spl(M)$ is equal to $\Spl(L)$ except for finitely many primes, then $L=M$ holds 
(see \cite[Theorem 8.19]{Cox}).

In this paper, we give a characterization of primes splitting completely in $L$ through the study of finite algebraic numbers introduced by the first author.
The basic question behind our study is as follows.
\begin{question}\label{main_question}
Given a finite Galois extension $L/\Q$, is there a rule which, for every prime $p$, determines whether $p$ belongs to $\Spl(L)$? 
\end{question}

This question is intimately related to the splitting of polynomials modulo primes studied in the context of a reciprocity law 
(see \cite{Wyman} for the exposition of a reciprocity law). 
In general, let $f(x)\in\Z[x]$ be a monic irreducible polynomial and 
$K=\Q(\alpha)\cong\Q[x]/(f(x))$ be the number field generated by a root $\alpha$ of $f$ 
(note that $K$ is not necessarily Galois over $\Q$). 
Then we have 
\begin{equation}\label{eq:splitting_poly} 
p\in \Spl(K) \Longleftrightarrow N_p(f)=\deg f
\end{equation}
for all but finitely many primes $p$ (cf.~\cite[Chapter I, Proposition 25]{Lang} and Remark after that), 
where we denote by $N_p(f)$ the number of distinct roots in $\bbF_p$ of the reduction of $f(x)$ modulo $p$. 
For example, the quadratic reciprocity law shows that $N_p(x^2-5)=2$ 
if and only if $p\equiv 1,4\bmod5$, which implies 
\begin{equation}\label{eq:Q5}
\Spl\big(\Q(\sqrt{5})\big)  =\{p\in \bP_\Q\mid p\equiv 1,4\bmod{5}\}.
\end{equation}

Similarly to the quadratic reciprocity law, the set $\Spl(L)$ is characterized by congruence conditions
if the Galois group $\Gal(L/\Q)$ is abelian. 
This is a consequence of the class field theory over $\Q$.
If $L$ is not abelian, even though the set $\Spl(L)$ is not characterized by congruence conditions, 
there are several studies that systematically provide explicit descriptions of $\Spl(L)$.
For example, let $L=\mathbb{Q}(\sqrt[3]{2}, e^{2\pi \sqrt{-1}/3})$ be the splitting field of $x^3-2$.
Its Galois group is isomorphic to the symmetric group $\mathfrak{S}_{3}$. 
Then, with finitely many exceptions, we have the following three equivalent conditions (see \cite[Theorem 1.1]{HS}):
\begin{equation}\label{eq:equivalent conditions x^3-2}
\begin{split}
p\in \Spl(L) &\Longleftrightarrow N_p(x^3-2)=3,\\
&\Longleftrightarrow \mbox{there exists $x,y\in\Z$ such that $p=x^2+27y^2$},\\
&\Longleftrightarrow b_p=2,
\end{split}
\end{equation}
where $\sum_{m\ge0}b_mq^m = q\prod_{n\ge1} (1-q^{6n})(1-q^{18n})=\eta(6\tau)\eta(18\tau)$ is a modular form of weight 1.
The first equivalence follows from \eqref{eq:splitting_poly} and the fact that $p$ splits completely in $K\cong\Q[x]/(x^3-2)$ if and only if $p$ splits completely in the Galois closure $L$ of $K$ (cf.~\cite[Chapter 1, \S8, Exercise 4]{Neukirch}). 
The second one comes from the study of primes of the form $x^2+n y^2$ 
(\cite[Theorem 9.4 and Proposition 9.5]{Cox}).
The last description of $ \Spl(L) $ is a special case of the Langlands program (cf.~\cite[\S5.1.2]{HS}): 
a correspondence between Galois representations and modular forms.
For more examples and recent developments on the reciprocity laws, see \cite{Weinstein} and references therein.

\subsection{Approaches via linear recurrent sequences}\label{subsec:approach}

Some of {the existing research} on congruences of linear recurrent sequences {has} been (often implicitly) linked with the decomposition laws of primes.
For example, let $(F_m)_m$ be the Fibonacci sequence, i.e., the sequence of integers satisfying the linear recurrence relation $F_m-F_{m-1}-F_{m-2}=0$ for $m\ge2$ with the initial values $(F_0,F_1)=(0,1)$.
It is known {(cf.~\cite[Chap.~XVII]{Dickson34})} that for any odd prime $p$ we have 
\[F_p\equiv 1\bmod p \iff p\equiv 1,4\bmod{5},\] 
which, by \eqref{eq:Q5}, gives another description of $\Spl\big(\Q(\sqrt{5})\big)$:
\[
\Spl\big(\Q(\sqrt{5})\big)  =\{p\in \bP_\Q\setminus\{2\} \mid F_p\equiv 1\bmod{5}\}.
\]
In this example, the quadratic field $\Q(\sqrt{5})$ naturally appears as the splitting field of the characteristic polynomial $x^2-x-1$ of the linear recurrent sequence $(F_m)_m$.

In a similar context, there are several studies on a rule determining $N_p(f)$ 
by a linear recurrent sequence modulo $p$ (cf.~\cite{MoreeNoubissie,Saito,Sun} and references therein).
As examples of solutions to Question \ref{main_question}, let us rephrase Sun's and Saito's results 
(note that these are not equivalent to the original statements 
from \cite[Theorem 5]{MoreeNoubissie} and \cite[Theorem 1]{Saito}).

\begin{theorem}\label{thm:SunSaito}
\begin{itemize}
\item[(1)] (Z.-W.~Sun) Let $f(x)=x^3+c_1x^2+c_2x+c_3\in \Z[x]$ be a monic irreducible polynomial such that $c_1^2\ne 3c_2$ and $L$ its splitting field over $\Q$.
Define the integer sequence $(s_m)_m$ by the linear recurrence 
$s_m+c_1s_{m-1}+c_2s_{m-2}+c_3s_{m-3}=0\ (\forall m\ge 3)$ with the initial values 
$(s_0,s_1,s_2)=(3,-c_1,c_1^2-2c_2)$.
Then,
\[ p\in \Spl(L) \Longleftrightarrow s_{p+1} \equiv c_1^2-2c_2 \bmod p\]
holds for all but finitely many primes $p$.
\item[(2)] (S.~Saito)
Let $f=x^d+c_1x^{d-1}+\cdots+c_d\in \Z[x]$ be a monic irreducible polynomial of degree $d\ge2$ 
with non-zero discriminant and $L$ its splitting field over $\Q$.
Define the integer sequence $(u_m)_m$ by $u_m+c_1u_{m-1}+\cdots+c_du_{m-d}=0\ (\forall m\ge d)$ with the initial values $(u_0,\ldots,u_{d-2},u_{d-1})=(0,\ldots,0,1)$.
Then,
\[ p\in \Spl(L) \Longleftrightarrow u_{p-2+d} \equiv 1 \bmod p\]
holds for all but finitely many primes $p$.
\end{itemize}
\end{theorem}

The condition that $c_1^2\ne 3c_2$ in Theorem \ref{thm:SunSaito} (1) is necessary 
to distinguish the cases $N_p(f)=1$ and $N_p(f)=3$.
Due to this, Theorem \ref{thm:SunSaito} (1) can not be applied to the case $f(x)=x^3-2$.
{On the other hand}, Theorem \ref{thm:SunSaito} (2) does cover the case.
Namely, we have that
\begin{equation}\label{eq:S_3_ext}
p \in \Spl\big(\mathbb{Q}(\sqrt[3]{2}, e^{2\pi \sqrt{-1}/3})\big) \Longleftrightarrow u_{p+1}\equiv 1\bmod p
\end{equation}
holds for all but finitely many primes $p$, where the integer sequence $(u_m)_m$ is defined by $u_m=2u_{m-3} \ (m\ge3)$ with $(u_0,u_{1},u_{2})=(0,0,1)$.
Notice that in \cite[Theorem 1]{Saito}, $d$ is assumed to be $\ge3$, but the case $d=2$ in Theorem \ref{thm:SunSaito} (2) also holds and covers the case of $f(x)=x^2-x-1$, the Fibonacci numbers $(F_m)_m$.

We remark that Theorem \ref{thm:SunSaito} is weaker than the full statements \cite[Theorem 5]{MoreeNoubissie} and \cite[Theorem 1]{Saito}; they also determined exceptional primes 
for which the claimed equivalences do not hold.
Later, we will come back to the determination problem of exceptional primes.

\subsection{Our result}
The reason we write Theorem \ref{thm:SunSaito} in this manner is because it fits the framework of {\it finite algebraic numbers}.
The notion of finite algebraic number is introduced in \cite{Rosen20} as an $\mathcal{A}$-analogue of the period interpretation of the algebraic numbers (see Remark \ref{rem:motivic} for more details).
Here the symbol $\mathcal{A}$ stands for the ring $\left( \prod_{p} \F_p \right) \big/ \left( \bigoplus_{p} \F_p\right)$, where $p$ runs over all primes.
This ring was first introduced by Kontsevich \cite[\S2.2]{Kontsevich} and recently used in the study of multiple zeta values (see Remark \ref{rem:mzv}).
In this setting, the sequence $(a_p)_p$ is zero in $\mathcal{A}$ if $a_p=0$ for all but finitely many primes $p$.

This framework provides a conceptual explanation as to why the set $\Spl(L)$ can be characterized by the values of a linear recurrent sequence modulo primes.
Moreover, using the same machinery, we can give a law of decomposition of primes in $L$, namely, a classification of primes (which are unramified in $L/\Q$) according to the number of prime factors in \eqref{eq:prime_decom}.
Recall that $p$ is \emph{unramified} in $L/\Q$ if $\e=1$ in the decomposition \eqref{eq:prime_decom}.
In this case, the number $r=[L:\Q]/\f$ of factors is determined by the conjugacy class of $\Gal(L/\Q)$ 
to which the Frobenius automorphism at $p$ belongs. 
For the precise statement, let 
\begin{equation}\label{eq:rec}
\Rec(f;\Q) \coloneqq \bigg\{ (a_m)_m \in \prod_{m\ge0} \Q \ \bigg| \ a_m+c_1a_{m-1}+\cdots+ c_d a_{m-d} =0\ \mbox{for all}\  m\ge d\bigg\}
\end{equation}
be the $\Q$-vector space of sequences satisfying the homogeneous linear recurrence relation with characteristic polynomial $f(x)=x^d+c_1x^{d-1}+\cdots+c_d\in \Z[x]$ of degree $d$.

\begin{theorem}\label{thm:intro}
Let $L$ be a finite Galois extension over $\Q$ and $f\in \Q[x]$ a monic irreducible polynomial such that all roots of $f$ are simple and form a basis of $L$ over $\Q$.
Then, for any conjugacy class $C$ in the Galois group $\Gal(L/\Q)$, 
there exists a unique sequence $(a_m)_m=(a_m(C))_m\in \Rec(f;\Q)$ such that 
\begin{equation}\label{eq:decomposition_law_C} a_p\equiv \begin{cases} 
1 \mod p & (\text{if the Frobenius automorphism at $p$ belongs to $C$}),\\ 
0 \mod p & (\text{otherwise})
\end{cases}
\end{equation}
holds for all but finitely many primes $p$. 
In particular, letting $C=\{\id\}$, we have 
\begin{equation}\label{eq:splitting_completely}
a_p\equiv \begin{cases} 
1 \mod p & (\text{if $p\in \Spl(L)$}),\\ 
0 \mod p & (\text{otherwise}) 
\end{cases}
\end{equation}
for all but finitely many primes $p$. 
\end{theorem}


The assumption on the polynomial $f$ in Theorem \ref{thm:intro} means that 
$f$ is the minimal polynomial of a normal element of $L$ 
(an element of $L$ is said {to be} normal if its Galois conjugates form a basis of $L$ over $\Q$). 
In particular, this implies that $\deg f=[L:\Q]$. 

\begin{remark}
{
Let us compare our Theorem \ref{thm:intro} with Saito's result \cite[Theorem 1]{Saito} 
of which Theorem \ref{thm:SunSaito} (2) is a special case. 
First, our assumption on the polynomial $f$ to be the minimal polynomial of a normal element 
is stronger than the one in Saito's result. 
For example, to apply Theorem \ref{thm:intro} to the field $\mathbb{Q}(\sqrt[3]{2}, e^{2\pi \sqrt{-1}/3})$, 
we need some polynomial of degree $6$. However, } 
it follows from \eqref{eq:S_3_ext} (a special case of Theorem \ref{thm:SunSaito} (2)) that 
the sequence $(a_m)_m\in {\rm Rec}(x^3-2;\Q)$ with $(a_0,a_1,a_2)=(0,1,0)$ satisfies 
\[p \in \Spl\big(\mathbb{Q}(\sqrt[3]{2}, e^{2\pi \sqrt{-1}/3})\big) \Longleftrightarrow a_{p}\equiv 1\bmod p\]
for all but finitely many primes $p$. 
{In this sense, Theorem \ref{thm:SunSaito} (2) is more generally applicable 
than our Theorem \ref{thm:intro}. }

{On the other hand, Theorem \ref{thm:intro} is finer than Saito's result:  
From the latter, one can read the information about the \emph{order} of the Frobenius automorphisms, 
while our result also gives the information of \emph{conjugacy classes} of them. 
We also remark that, with Saito's construction, one cannot control $a_p\bmod p$ for $p\notin\Spl(L)$ 
as done in \eqref{eq:splitting_completely}. }
\end{remark}

An interesting result is obtained when we apply Theorem \ref{thm:intro} to a characterization of 
the $p$-th Fourier coefficients of a modular form of weight 1 
corresponding to an irreducible odd Artin representation 
$\rho\colon\Gal(\overline{\Q}/\Q)\rightarrow {\rm GL}_2(\C)$ (cf.~\cite{DS74, Khare1997,KW091,KW092}).
If $\sum_{m\ge1}b_mq^m$ is such a modular form and $L$ is the fixed field by the kernel of $\rho$, 
then $b_p = \Tr(\rho(\phi_\frp))$ holds for any prime $p$ unramified in $L/\Q$  (note that there are only finitely many primes which are not unramified, i.e., ramify in $L/\Q$), 
where $\phi_\frp\in\Gal(L/\Q)$ denotes the Frobenius automorphism at a prime $\frp$ of $L$ above $p$.
Since the trace $\Tr(\rho(\sigma))$ depends only on the conjugacy class of $\sigma$ in $\Gal(L/\Q)$, 
the following linear combination of the sequences $(a_m(C))_m$ in Theorem \ref{thm:intro} is well-defined: 
\[a_m(\rho)=\sum_{\text{$C\subset\Gal(L/\Q)$: conjugacy class}} \Tr(\rho(\sigma_C))a_m(C) 
\qquad (\sigma_C\in C). \]
Then we have 
\[a_p(\rho)\equiv \Tr(\rho(\phi_\frp))\equiv b_p \bmod{p}\]
for all but finitely many primes $p$. 
The following is an example of this construction: 

\begin{proposition}\label{prop:S_3_intro}
Let $f(x)=x^{6}+3x^{5}+12x^{4}+25x^{3}+60x^{2}+51x+127$, whose splitting field is $\mathbb{Q}(\sqrt[3]{2}, e^{2\pi \sqrt{-1}/3})$.
Define the sequence $(a_m)_m\in {\rm Rec}(f;\Q)$ by the initial values $(a_0,a_1,\ldots,a_5)=(0,2,-4,-3,20,-40)$.
Then, the $p$-th Fourier coefficient of $\eta(6\tau)\eta(18\tau)=\sum_{m\ge0}b_mq^m$ 
as in \eqref{eq:equivalent conditions x^3-2} satisfies
\[ a_p\equiv b_p \mod p\]
for all but finitely many primes $p$. 
\end{proposition}

Here we list the values of $a_p$ and $b_p$ for small primes $p$: 
\[\begin{array}{r|rrrrrrrrr}
p   &  2 &  3 &   5 &    7 &   11 &     13 &     17 & \dots &            31\\
\hline
a_p & -4 & -3 & -40 & -169 & 8690 & -49336 & 854726 & \dots & -647044186129\\
b_p &  0 &  0 &   0 &   -1 &    0 &     -1 &      0 & \dots &             2
\end{array}\]

Our proof of Theorem \ref{thm:intro} relies on the theory of finite algebraic numbers; for example, the fact that every finite algebraic number is obtained from a linear recurrent sequence over $\Q$ (see Theorem \ref{thm:r_f vs P_L^A}).
It is possible to generalize this theory to the relative case, i.e., 
the base field $\Q$ can be replaced by an arbitrary number field $K$, 
with no essential difficulty, but we will not develop this point here. 
The proof provides a method of 
explicit computation of the sequence describing the decomposition laws.

\subsection{Refinement}
As we have mentioned in the end of \S\ref{subsec:approach}, there are finitely many exceptional primes for which the congruences in Theorem \ref{thm:intro} do not hold.
To determine these exceptions, we refine the theory of finite algebraic numbers 
working over the ring of $S$-integers $\Z_S:=\Z\left[ p^{-1} \mid p\in S\right]$ 
for a finite subset $S$ of $\bP_\Q$.
In this setting, we introduce the ring of $S$-integers in finite algebraic numbers and 
prove that every such $S$-integer is obtained from 
a linear recurrent sequence over $\Z_S$ (cf.~Theorem \ref{thm:S-refinement}).
With this refinement, we can control exceptional primes in Theorem \ref{thm:intro} as follows.

\begin{theorem}\label{thm:exceptional prime}
Let $L$, $f$ and $(a_m)_m=(a_m(C))_m\in \Rec(f;\Q)$ be as in Theorem \ref{thm:intro} and denote all distinct roots of $f$ by $\xi_1,\ldots,\xi_d$, where {$d=[L:\Q]$}.
Suppose that a finite subset $S\subset \bP_\Q$ of primes satisfies the following conditions:
\begin{enumerate}
\item[(S0)] Each component of the sequence $(a_m)_m$ is an $S$-integer. 
\item[(S1)] $S$ contains all primes ramifying in $L/\Q$. 
\item[(S2)] $f$ belongs to $\Z_S[x]$ and $\disc(f):=\prod_{i<j} (\xi_i-\xi_j)^2$ is invertible in $\Z_S$. 
\end{enumerate}
Then, the congruence \eqref{eq:decomposition_law_C} holds for all primes $p\not\in S$. 
\end{theorem}

Note that, if $f\in \Z_S[x]$, then we can confirm the condition (S0) by checking $a_m\in\Z_S$ 
for $m=0,\ldots,d-1$. 

Let us illustrate an example of the use of Theorem \ref{thm:exceptional prime}.

\begin{example}\label{ex:Q8}
Set $\omega_1=\sqrt{(2+\sqrt{2})(3+\sqrt{3})}$ and let $L=\mathbb{Q}(\omega_1)$.
Then its Galois group is isomorphic to the quaternion group $Q_{8}$.
The element $\xi=(1+\sqrt{2})(1+\sqrt{3})(1+\omega_1)$ is a normal element of $L$
and its minimal polynomial is 
\[f(x)=x^{8}-8x^{7}-736x^{6}-3344x^{5}+5800x^{4}+18272x^{3}-27904x^{2}+9920x-368. \]
By Theorem \ref{thm:intro}, one can find the sequence $(a_m)_m\in \Rec(f;\Q)$ such that \eqref{eq:splitting_completely} holds for all but finitely many primes $p$.
Using the method described in \S\ref{sec:method}, we obtain the initial values of $(a_m)_m$ as follows:
\begin{align*}
&
a_{0}=\frac{1}{8}, \quad a_{1}=1, \quad a_{2}=\frac{1393}{20}, \quad
a_{3}=\frac{3199}{2}, \\
&
a_{4}=\frac{2003629}{30}, \quad
a_{5}=1936618, \quad
a_{6}=\frac{1043676173}{15}, \quad
a_{7}=\frac{10973964638}{5}.
\end{align*}
Now let us determine exceptional primes.
For the first condition (S0), we count prime factors of denominators of the initial values $(a_0,\ldots,a_7)$ which are $2$, $3$ and $5$.
For (S1), we recall that a prime $p$ ramifies in $L/\Q$ if (and only if) $p$ divides the discriminant $\disc (L)$ of $L$ (see \eqref{eq:def_disc_L} for the definition).
The prime factorizations of $\disc(L)$ and $\disc(f)$
are given by  
\begin{align*}
\disc(L)&=2^{24} \cdot 3^{6},\\
\disc(f)&=2^{72}\cdot 3^6\cdot 23^2 \cdot 241^2\cdot 359^2\cdot 147409^2.
\end{align*}
Thus, the set 
\[S=\{2,3,5,23,241,359,147409\} \]
satisfies the conditions (S0) to (S2) in Theorem \ref{thm:exceptional prime}.
Hence, the above sequence $(a_m)_m$ satisfies \eqref{eq:splitting_completely}
for all prime $p\notin S$.
\end{example}

To complement the list of prime{s} splitting completely in $L$, we may use a mathematical software.
Indeed, we can confirm that every $p\in S$ does not split completely in $L$ of Example \ref{ex:Q8}.
This can be checked by the following code in SAGEMATH \cite{sage}:\\
\verb|sage: K.<y> = NumberField(x^8 - 8*x^7 - 736*x^6 - 3344*x^5 + 5800*x^4 + 18272*x^3 |\\
\verb|- 27904*x^2 + 9920*x - 368)|\\
\verb|sage: K.completely_split_primes(147409)|\\
\verb|[71, 191, 239, 313, 337, 383, ..., 147311]|

\begin{remark}
In the same way, we obtain a set $S$ of (possibly) exceptional primes in Proposition \ref{prop:S_3_intro} 
as follows (note that in this case, we have $\disc(L)=-2^4\cdot 3^7$ and $\disc(f)=-2^4\cdot 3^{17}\cdot 5^2\cdot 11^2$).
\[S=\{2,3,5,11\}. \]
By a direct calculation, one can check that these four primes are actually not exceptional 
(see the table given after Proposition \ref{prop:S_3_intro}). 
Therefore, in this case, we conclude that the congruence 
$a_p\equiv b_p\pmod{p}$ holds for \emph{every} prime $p$.  
\end{remark}

\subsection{Contents}
The paper is structured as follows.
In \S2, we recall the theory of finite algebraic numbers developed by the first author in \cite{Rosen20}, 
using a slightly general convention. 
In \S3, we explain the method to find a recurrent sequence $(a_m)_m$ of Theorem \ref{thm:intro} and {illustrate} it in the cases of $[L:\Q]=2$ and $3$. 
At this stage, there is a problem that our method described in \S3.2 does not provide any information about the finite set $S$ of exceptional primes.
In \S4, we settle this problem by showing Theorem \ref{thm:exceptional prime} and give some more examples 
in which both $S$ and $(a_m)_m$ are explicitly determined.

Notation: Throughout the paper, we let $L$ be a finite Galois extension of $\Q$ and set $G=\Gal(L/\Q)$. 

\section{Finite Algebraic Numbers}
In this section, we review some basic constructions relevant to finite algebraic numbers.
Our expositions are rendered as self-contained as possible, whilst the materials and some of results presented here are essentially contained in \cite{Rosen20}. 

\subsection{Definition of Finite Algebraic Numbers}

For a prime $p$, let $\F_p$ be the finite field of order $p$ and consider the ring 
\[\mathcal{A}\coloneqq\bigg( \prod_{p\in \bP_\Q} \F_p \bigg) \big/ \bigg( \bigoplus_{p\in \bP_\Q} \F_p\bigg).\]
An element of $\cA$ is denoted as $(a_p)_p$ with $a_p\in\F_p$ for each prime $p$, 
and two elements $(a_p)_p ,(b_p)_p \in \mathcal{A}$ are equal if $a_p=b_p$ in $\F_p$ holds for all but finitely many primes $p$. 
Thus, to describe an element $(a_p)_p\in\cA$, we can ignore a finite number of primes $p$. 

As an example, let $c$ be a rational number. 
Since we have $c\in \Z_{(p)}$ except for a finite number of primes $p$, 
where $\Z_{(p)}$ denotes the localization of $\Z$ at the prime ideal $(p)$, 
we can define an element $(c\bmod p)_p\in\cA$. 
Here, by an abuse of notation, we write ``$c\bmod p$'' for the class of $c\in \Z_{(p)}$ 
in the residue field $\Z_{(p)}/p\Z_{(p)}\cong \F_p$. 
Thus we obtain an injective ring homomorphism $\Q\to\cA$ given by $c\mapsto (c\bmod p)_p$, 
through which $\cA$ is viewed as a $\Q$-algebra. 

\begin{definition}\label{def:FAN}
An element $\alpha\in\cA$ is called a \emph{finite algebraic number} if 
there exists a sequence $(a_m)_m$ of rational numbers satisfying a homogeneous 
linear recurrence relation such that $\alpha=(a_p\bmod p)_p$. 
The set of all finite algebraic numbers is denoted by $\cP^\cA_0$. 
\end{definition}

As in \eqref{eq:rec}, let $\Rec(f;\Q)$ denote the $\Q$-vector space of rational sequences satisfying a homogeneous linear recurrence relation with characteristic polynomial $f(x)=x^d+c_1x^{d-1}+\cdots+c_d\in \Q[x]$ of degree $d$.
Since elements in $\Rec(f;\Q)$ are determined by initial values, it holds that $\dim_\Q \Rec(f;\Q)=d$.
For any sequence $(a_m)_m\in\Rec(f;\Q)$ and a prime $p$ which does not divide 
the denominators of the initial terms $a_0,\ldots,a_{d-1}$ and the denominators of 
the coefficients $c_1,\ldots,c_d$ of $f$, we see that all terms $a_m$ belong to $\Z_{(p)}$. 
In particular, $(a_p\bmod p)_p\in\cA$ is well defined. 
Therefore, there is a $\Q$-linear map 
\begin{equation}\label{eq:r_f}
r_f\colon \Rec(f;\Q)\longrightarrow \cA;\ 
(a_m)_m\longmapsto (a_p\bmod p)_p. 
\end{equation}
By definition, the subset $\cP^\cA_0$ of $\cA$ is the union of 
the images of these maps $r_f$, 
where $f$ runs over all monic polynomials over $\Q$. 

\begin{example}
\begin{enumerate}
\item 
A rational number, viewed as an element of $\cA$, is a finite algebraic number. 
Indeed, for $f(x)=x-1$, the image of the map $r_f\colon\Rec(f;\Q)\to\cA$ is 
exactly the rational numbers. 

\item 
Let $f(x)=x^2-x-1$.
The Fibonacci sequence $(F_m)_m$ given by $F_m=F_{m-1}+F_{m-2}$ with $F_0=0,F_1=1$ is 
an element of ${\rm Rec}(f;\Q)$, so $r_f((F_m)_m)$ is a finite algebraic number.
Remark that the Fibonacci numbers satisfy $F_p \equiv \big(\tfrac{5}{p}\big)\mod p$ for all primes $p$ (see \cite[Chap.~XVII]{Dickson34}), where $\big(\tfrac{D}{p}\big)$ denotes the Kronecker symbol.
Hence, 
\begin{equation*}\label{eq:Fibonacci} 
r_f((F_m)_m)=\left(\left(\frac{5}{p}\right)\right)_p.
\end{equation*}
This shows that $\big(\big(\tfrac{5}{p}\big)\big)_p\in \mathcal{P}_0^{\mathcal{A}}$.
In this way, we can also prove that for any square-free integer $D$, 
the element $\big(\big(\tfrac{D}{p}\big)\big)_p\in \mathcal{A}$ is a finite algebraic number (see, for example, \cite[p.46]{R89}).
\end{enumerate}
\end{example}

The notion of finite algebraic number is first introduced by the first author \cite[Theorem 1.1]{Rosen20}, where three equivalent constructions are made.
The first construction is given in Definition \ref{def:FAN}. 
The second construction is Galois theoretic; 
we recast this in the following subsections 
and prove its equivalence with the first construction (see Corollary \ref{cor:cP^cA_0 and L}).
For the third construction, see Remark \ref{rem:motivic}.

\subsection{The Ring $\cA_L$}
Recall that $L$ denotes a finite Galois extension of $\Q$. 
Let $\bP_L$ denote the set of all maximal ideals of $O_L$, the ring of integers in $L$. 
Then we define the ring $\cA_L$ associated with $L$ by 
\[\cA_L\coloneqq \Biggl(\prod_{\frp\in \bP_L}O_L/\frp\Biggr)
\biggm/\Biggl(\bigoplus_{\frp\in \bP_L}O_L/\frp\Biggr). \]
In particular, the ring $\cA_\Q$ coincides with the ring 
$\cA=\bigl(\prod_p\bbF_p\bigr)/\bigl(\bigoplus_p\bbF_p\bigr)$ introduced in the previous subsection, 
where we identify the set $\bP_\Q$ with the set of rational primes. 

As in the case of $\cA_\Q=\cA$, 
an element $(a_\frp)_\frp$ of $\cA_L$ is specified by giving elements $a_\frp\in O_L/\frp$ 
for all but finitely many $\frp\in\bP_L$. 
In particular, for any $c\in L$, we have a well-defined element $(c\bmod\frp)_\frp\in\cA_L$ 
since $c\in O_{L,\frp}$ (the localization of $O_L$ at $\frp$) 
for all but finitely many $\frp\in\bP_L$. 
Thus $\cA_L$ admits an injective ring homomorphism $L\to\cA_L$ given by 
$c\mapsto (c\bmod\frp)_\frp$, through which $\cA_L$ is viewed as an $L$-algebra. 


We define an action of the Galois group $G=\Gal(L/\Q)$ on the ring $\cA_L$ as follows:
\[\sigma\bigl((a_\frp)_\frp\bigr)=\bigl(\sigma(a_{\sigma^{-1}(\frp)})\bigr)_\frp
\quad\text{for $\sigma\in G$}. \]
Here the isomorphism $O_L/\sigma^{-1}(\frp)\to O_L/\frp$ induced by $\sigma\colon O_L\to O_L$ 
is denoted by the same symbol $\sigma$. 
The inclusion $L\hookrightarrow\cA_L$ is $G$-equivariant under this action. 

\subsection{The Frobenius-Evaluation Map}
Let  
\[\Fun(G,L)\coloneqq\{g\colon G\to L\}\]
be the $L$-algebra of all functions from $G$ to $L$ 
with pointwise operations, e.g., $(g_1+g_2)(\tau)=g_1(\tau)+g_2(\tau)$. 
This ring also has a $G$-action defined by 
\[(\sigma g)(\tau)\coloneqq\sigma\bigl(g(\sigma^{-1}\tau\sigma)\bigr)
\quad \text{for $g\in\Fun(G,L)$ and $\sigma,\tau\in G$}, \]
under which the structure map $L\to\Fun(G,L)$ is $G$-equivariant. 

As is well-known in Galois theory, there is an isomorphism of $L$-algebras 
\[\varphi\colon L\otimes_\Q L\longrightarrow \Fun(G,L);\ 
\xi\otimes\eta\longmapsto \bigl(\tau\mapsto \xi\tau(\eta)\bigr), \]
where $L\otimes_\Q L$ is regarded as an $L$-algebra by the map $\xi\mapsto\xi\otimes 1$. 
This isomorphism is also $G$-equivariant if we let $G$ act on $L\otimes_\Q L$ diagonally: indeed, 
\[\varphi\bigl(\sigma(\xi\otimes\eta)\bigr)(\tau)=\sigma(\xi)\tau\sigma(\eta)
=\sigma\bigl(\xi\,\sigma^{-1}\tau\sigma(\eta)\bigr)
=\bigl(\sigma\varphi(\xi\otimes\eta)\bigr)(\tau). \]

If $\frp\in\bP_L$ is unramified in $L/\Q$, let $\phi_\frp\in G$ be the Frobenius automorphism. 

\begin{definition}\label{defn:ev}
We define the $L$-algebra homomorphism $\ev\colon\Fun(G,L)\to\cA_L$ by 
\[\ev(g)\coloneqq \bigl(g(\phi_\frp)\bmod\frp\bigr)_\frp. \]
This expression makes sense in $\cA_L$ since, for all but finitely many $\frp\in\bP_L$, 
$\frp$ is unramified in $L/\Q$ and $g(\tau)\in O_{L,\frp}$ for all $\tau\in G$. 
\end{definition}

\begin{proposition}\label{prop:ev}
The map $\ev\colon\Fun(G,L)\to\cA_L$ is $G$-equivariant and injective. 
\end{proposition}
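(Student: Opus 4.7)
The plan is to treat $G$-equivariance and injectivity separately. The first part reduces to a standard conjugation identity for Frobenius elements, and the second to Chebotarev's density theorem applied one element of $G$ at a time.

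For the $G$-equivariance $\ev(\sigma g)=\sigma\ev(g)$, I would compare the two sides component by component. By the definition of the $G$-action on $\Fun(G,L)$, the $\frp$-component of $\ev(\sigma g)$ is $\sigma(g(\sigma^{-1}\phi_\frp\sigma))\bmod\frp$. On the other side, the definition of the $G$-action on $\cA_L$ says that the $\frp$-component of $\sigma\ev(g)$ is obtained by applying $\sigma\colon O_L/\sigma^{-1}(\frp)\to O_L/\frp$ to the $\sigma^{-1}(\frp)$-component of $\ev(g)$, yielding $\sigma(g(\phi_{\sigma^{-1}(\frp)}))\bmod\frp$. Equality of the two expressions therefore reduces to the classical relation $\phi_{\sigma(\frp)}=\sigma\phi_\frp\sigma^{-1}$, which is immediate from the characterization of $\phi_\frp$ by $x\mapsto x^{|O_L/\frp|}\pmod\frp$ on $O_L$.

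For injectivity, I would suppose $g\in\Fun(G,L)$ satisfies $\ev(g)=0$ and fix an arbitrary $\tau\in G$, aiming to conclude $g(\tau)=0$. By Chebotarev's density theorem there are infinitely many $\frp\in\bP_L$ that are unramified in $L/\Q$ and satisfy $\phi_\frp=\tau$. The hypothesis $\ev(g)=0$ forces $g(\phi_\frp)\bmod\frp=0$ for all but finitely many $\frp$, hence $g(\tau)\in\frp$ for infinitely many $\frp\in\bP_L$. Since any nonzero element of $L$ belongs to only finitely many $\frp\in\bP_L$ (being a unit in $O_{L,\frp}$ outside of the finite set of primes dividing its numerator or denominator), this yields $g(\tau)=0$.

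Neither step presents a genuine obstacle: the conjugation formula for Frobenius is a textbook identity, and the use of Chebotarev is overkill — even the weaker Frobenius density theorem, which only asserts that every conjugacy class of $G$ is realized as $\{\phi_\frp:\frp\mid p\}$ for infinitely many rational primes $p$, would suffice here. The only point requiring slight care is bookkeeping the direction of the $G$-action on $\cA_L$ (the appearance of $\sigma^{-1}(\frp)$) when matching the two sides of the equivariance identity.
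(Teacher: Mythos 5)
Your proof is correct and follows the same route as the paper's: the equivariance is checked componentwise using the conjugation identity $\sigma^{-1}\phi_\frp\sigma=\phi_{\sigma^{-1}(\frp)}$, and injectivity is deduced from \v{C}ebotarev (or, as you note, even Frobenius density) by fixing $\tau\in G$ and finding infinitely many $\frp$ with $\phi_\frp=\tau$ and $g(\tau)\in\frp$. You merely spell out the final step (a nonzero element of $L$ lies in only finitely many primes) which the paper leaves implicit.
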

\begin{proof}
The $G$-equivariance is directly verified as 
\[\ev\bigl(\sigma g)=\bigl(\sigma(g(\sigma^{-1}\phi_\frp\sigma))\bigr)_\frp
=\bigl(\sigma(g(\phi_{\sigma^{-1}(\frp)}))\bigr)_\frp=\sigma\bigl(\ev(g)\bigr). \]
To prove the injectivity, 
suppose that $g\in\Fun(G,L)$ satisfies $\ev(g)=0$.
Then, for any $\tau \in G$ there are infinitely many $\frp$ such that $g(\phi_\frp)\equiv 0 \bmod\frp$ and $\phi_\frp=\tau$ (because of the \v{C}ebotarev density theorem; see e.g.~\cite[VII (13.4)]{Neukirch}). 
This shows that $g(\tau)=0$ for all $\tau \in G$, as desired.
\end{proof}

\subsection{The Space of Linear Recurrent Sequences}
Let $f(x)=x^d+c_1x^{d-1}+\cdots+c_d\in \Q[x]$ be a monic polynomial over $\Q$ of degree $d$. 
For any extension field $K$ of $\Q$, 
we let $\Rec(f;K)$ be the $K$-vector space of homogeneous linear recurrent sequences 
with characteristic polynomial $f$: 
\[\Rec(f;K)\coloneqq 
\Biggl\{(a_m)_m\in\prod_{m\ge 0}K\Biggm| a_m+c_1 a_{m-1}+\cdots+c_d a_{m-d}=0 
\text{ for } \forall m\ge d\Biggr\}. \]
The map $\Rec(f;K)\to K^d$ given by $(a_m)_m\mapsto (a_0,\ldots,a_{d-1})$ 
is an isomorphism of $K$-vector spaces. 

In the following, assume that $f$ decomposes into linear factors over $L$ 
(which is a finite Galois extension over $\Q$, as before). 
Let $\xi_1,\ldots,\xi_\nu\in L$ be all the distinct roots of $f$ and 
let $\mu_j$ denote the multiplicity of the root $\xi_j$, namely, $f(x)=\prod_{j=1}^\nu (x-\xi_j)^{{\mu}_j}$.
Then the sequences 
\begin{equation}\label{eq:basis of Rec(f;L)}
\bigl(\tbinom{m}{i}\xi_j^{m-i}\bigr)_m \quad 
\text{for $j=1,\ldots,\nu$ and $i=0,\ldots,\mu_j-1$}
\end{equation}
form a basis of the $L$-vector space $\Rec(f;L)$ 
(if $\xi_j=0$, we replace the above sequence with $(\delta_{m,i})_m$). 

Recall that $L\otimes_\Q L$ is equipped with an $L$-algebra (hence $L$-linear) structure 
given by $\xi\mapsto \xi\otimes 1$, and the diagonal $G$-action. 
On the other hand, the $L$-vector space $\Rec(f;L)$ also has a $G$-action defined entrywise: 
$\sigma\bigl((a_m)_m\bigr)\coloneqq\bigl(\sigma(a_m)\bigr)_m$. 
Note that these $G$-actions are semilinear over $L$, i.e., 
we have $\sigma(\xi v)=\sigma(\xi)\sigma(v)$ for $\sigma\in G$, 
$\xi\in L$ and $v\in L\otimes_\Q L$ or $\Rec(f;L)$. 

\begin{proposition}
Let $\psi_f\colon\Rec(f;L)\to L\otimes_\Q L$ be the $L$-linear map defined 
on the basis \eqref{eq:basis of Rec(f;L)} by 
\[\psi_f\Bigl(\bigl(\tbinom{m}{i}\xi_j^{m-i}\bigr)_m\Bigr)\coloneqq 
\begin{cases}
1\otimes \xi_j & (i=0), \\
0 & (i>0). 
\end{cases} \]
Then $\psi_f$ is a $G$-equivariant map. 
\end{proposition}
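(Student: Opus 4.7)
\emph{Proposal.} The plan is to verify the $G$-equivariance $\psi_f(\sigma v) = \sigma\psi_f(v)$ directly on the basis \eqref{eq:basis of Rec(f;L)}, and then propagate it to all of $\Rec(f;L)$ using the fact that $\psi_f$ is $L$-linear while the two $G$-actions are $L$-semilinear.

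The key input is that $f \in \Q[x]$ is fixed by $G$, so the Galois group permutes the distinct roots of $f$ while preserving their multiplicities: for each $\sigma \in G$ and each $j \in \{1,\ldots,\nu\}$ there is a unique index, which I denote $\sigma(j)$, with $\sigma(\alpha_j) = \alpha_{\sigma(j)}$ and $\mu_j = \mu_{\sigma(j)}$. Since $\tbinom{m}{i} \in \Z$ is fixed by $\sigma$, the entrywise action on a basis vector reads
\[ \sigma\bigl((\tbinom{m}{i}\alpha_j^{m-i})_m\bigr) = \bigl(\tbinom{m}{i}\alpha_{\sigma(j)}^{m-i}\bigr)_m, \]
and the latter is again one of the basis vectors listed in \eqref{eq:basis of Rec(f;L)} (with the convention for $\alpha_j = 0$ handled by $\sigma(0)=0$, so $\sigma(j) = j$).

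Applying $\psi_f$ to this rewritten sequence gives $1 \otimes \alpha_{\sigma(j)}$ when $i=0$ and $0$ when $i>0$. On the other hand, the diagonal action of $\sigma$ on $L \otimes_\Q L$ sends $1 \otimes \alpha_j$ to $1 \otimes \sigma(\alpha_j) = 1 \otimes \alpha_{\sigma(j)}$, so
\[ \sigma\psi_f\bigl((\tbinom{m}{i}\alpha_j^{m-i})_m\bigr) = \begin{cases} 1 \otimes \alpha_{\sigma(j)} & i=0, \\ 0 & i>0, \end{cases} \]
which matches. To extend from the basis to arbitrary elements, I observe that for any basis vector $v$ and any $\alpha \in L$,
\[ \psi_f\bigl(\sigma(\alpha v)\bigr) = \psi_f\bigl(\sigma(\alpha)\sigma(v)\bigr) = \sigma(\alpha)\,\psi_f(\sigma v) \]
by $L$-linearity of $\psi_f$, while semilinearity of the action on $L \otimes_\Q L$ gives
\[ \sigma\bigl(\psi_f(\alpha v)\bigr) = \sigma\bigl(\alpha\psi_f(v)\bigr) = \sigma(\alpha)\,\sigma\psi_f(v); \]
the basis-level identity then implies agreement on all $L$-linear combinations.

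There is no serious obstacle; the only point that requires care is the interplay between the $L$-linearity of $\psi_f$ and the $L$-semilinearity of the $G$-actions, which is why the verification must start from a basis rather than, say, from a generating set as an $L\otimes_\Q L$-module. The whole argument works because the basis \eqref{eq:basis of Rec(f;L)} is permuted by $G$ (up to fixed integer coefficients), mirroring exactly the way $G$ permutes the elementary tensors $1 \otimes \alpha_j$ on the target side.
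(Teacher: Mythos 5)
Your proof is correct and follows essentially the same route as the paper's: verify the equivariance on the basis \eqref{eq:basis of Rec(f;L)} using the fact that $\sigma$ permutes the roots $\alpha_j$ preserving multiplicities while fixing the integer coefficients $\tbinom{m}{i}$, and then invoke $L$-semilinearity of the two $G$-actions to pass from the basis to general $L$-linear combinations. The paper dispatches the extension step with a single sentence (``by semilinearity, it suffices to check the equivariance on the basis''), whereas you spell it out; otherwise the arguments coincide.
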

\begin{proof}
By semilinearity, it suffices to check the equivariance on the basis. 
For $j=1,\ldots,\nu$, $i=0,\ldots,\mu_j-1$ and $\sigma\in G$, 
we have 
\begin{align*}
\psi_f\Bigl(\sigma\bigl(\tbinom{m}{i}\xi_j^m\bigr)_m\Bigr)
=\psi_f\Bigl(\bigl(\tbinom{m}{i}\sigma(\xi_j)^m\bigr)_m\Bigr)
&=\begin{cases}
1\otimes\sigma(\xi_j) & (i=0), \\
0 & (i>0)
\end{cases}\\
&=\sigma\Bigl(\psi_f\Bigl(\bigl(\tbinom{m}{i}\xi_j^m\bigr)_m\Bigr)\Bigr), 
\end{align*}
where the second equality follows from that $\sigma(\xi_j)=\xi_{j'}$ for some $j'$ 
and $\mu_j=\mu_{j'}$. 
This shows the desired equivariance. 
\end{proof}

The map $\psi_f$ is not an isomorphism in general. 
In fact, we have the following: 

\begin{proposition}\label{prop: condition on f}
For a monic polynomial $f\in\Q[x]$ which decomposes into linear factors over $L$,
the following conditions are equivalent: 
\begin{enumerate}
\item The $L$-linear map $\psi_f\colon\Rec(f;L)\to L\otimes_\Q L$ is an isomorphism. 
\item 
All roots of $f$ are simple, and form a basis of $L$ over $\Q$. 
\item $f$ is the minimal polynomial of a normal element of $L$ over $\Q$ 
(recall that $\xi\in L$ is called normal if its Galois conjugates $\sigma(\xi)$ ($\sigma\in G$) 
form a basis of $L$ over $\Q$).  
\end{enumerate}
In particular, for any finite Galois extension $L$ over $\Q$, 
there exists $f$ satisfying these conditions. 
\end{proposition}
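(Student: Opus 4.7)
The plan is to reduce the three conditions to concrete linear-algebraic statements about the roots of $f$, and then obtain the existence clause from the classical normal basis theorem.

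First I would prove $(1)\Leftrightarrow(2)$ by directly reading off the image and kernel of $\psi_f$ from its values on the basis \eqref{eq:basis of Rec(f;L)}. The image is the $L$-span of $\{1\otimes\alpha_j\}_{j=1}^{\nu}$ in $L\otimes_\Q L$, and the kernel contains the $d-\nu$ basis vectors indexed by $i>0$. Using the standard fact that for a subset $E\subset L$, the family $\{1\otimes e\}_{e\in E}$ is $L$-linearly independent (respectively, $L$-spanning) in $L\otimes_\Q L$ if and only if $E$ is $\Q$-linearly independent (respectively, $\Q$-spanning) in $L$, these observations translate immediately: $\psi_f$ is surjective iff $\{\alpha_j\}$ spans $L$ over $\Q$, and $\psi_f$ is injective iff $\mu_j=1$ for every $j$ and $\{\alpha_j\}$ is $\Q$-linearly independent. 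Combining, $(1)\Leftrightarrow(2)$.

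For $(2)\Leftrightarrow(3)$, the direction $(3)\Rightarrow(2)$ is immediate from the definition of a normal element. For the converse, the main task is to show that $f$ is irreducible. Suppose $f=g_1g_2$ with $g_i\in\Q[x]$ of positive degree, and let $s_i\in\Q$ be the sum of the roots of $g_i$ (i.e.\ minus its subleading coefficient). If some $s_i$ vanishes, then the roots of $g_i$ already satisfy a nontrivial $\Q$-linear relation, contradicting $(2)$ (the edge case $\deg g_i=1$ forces a root to be $0$, which cannot belong to a basis). Otherwise $s_1,s_2\neq 0$ and the identity $s_1^{-1}\sum_{\alpha\in\mathrm{roots}(g_1)}\alpha \,-\, s_2^{-1}\sum_{\beta\in\mathrm{roots}(g_2)}\beta \,=\, 1-1 \,=\, 0$ is a nontrivial $\Q$-linear relation among the $d$ roots of $f$, again contradicting $(2)$. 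Hence $f$ is irreducible. Since its $d$ distinct roots lie in $L$ and span $L$ over $\Q$, we get $[\Q(\alpha_1):\Q]=d=[L:\Q]$, so $\Q(\alpha_1)=L$; the Galois conjugates of $\alpha_1$ are then precisely the roots of $f$, which form a $\Q$-basis of $L$, making $\alpha_1$ a normal element with minimal polynomial $f$.

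The ``in particular'' statement is just the classical normal basis theorem: $L/\Q$ admits a normal element $\alpha$, and the minimal polynomial of $\alpha$ satisfies $(3)$. I expect the main technical point to be the irreducibility step in $(2)\Rightarrow(3)$; the sum-of-roots manipulation above is the essential trick that converts any rational factorization of $f$ into a nontrivial $\Q$-linear dependence among its roots.
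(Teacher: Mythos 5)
Your proof is correct and follows essentially the same route as the paper's. The core observation in $(2)\Rightarrow(3)$---that a nontrivial rational factorization of $f$ produces two sets of roots with rational (nonzero, after handling degenerate cases) partial sums, whose suitably scaled difference is a nontrivial $\Q$-linear relation among the roots---is exactly the paper's argument, merely phrased in terms of irreducibility of $f$ rather than transitivity of the $G$-action on the roots (these being equivalent since $f\in\Q[x]$, so the irreducible factors of $f$ over $\Q$ correspond to the $G$-orbits of roots). Your treatment of the $s_i=0$ edge case and the explicit reduction of $(1)\Leftrightarrow(2)$ to the standard fact about $L$-linear independence of $\{1\otimes e\}_{e\in E}$ in $L\otimes_\Q L$ are slightly more detailed than what the paper records (which simply declares $(1)\Leftrightarrow(2)$ ``clear from the construction''), but the substance is the same, and the ``in particular'' clause is, as in the paper, a direct appeal to the normal basis theorem.
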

\begin{proof}
The equivalence of the conditions (1) and (2) is clear from the construction of $\psi_f$. 

The condition (3) obviously implies (2). 
To show the converse, let us assume the condition (2). 
Then it is enough to prove that the $G$-action on the set $\{\xi_1,\ldots,\xi_\nu\}$ of all the distinct roots of $f$ is transitive. 
Note first that the sum of the elements of each $G$-orbit in $\{\xi_1,\ldots,\xi_\nu\}$ 
is a rational number, which is nonzero because of the linear independence. 
If there are two (or more) $G$-orbits, there exist two ways to express the rational numbers 
as linear combinations of $\xi_1,\ldots,\xi_\nu$, which contradicts the linear independence. 
Thus the set $\{\xi_1,\ldots,\xi_\nu\}$ is a single $G$-orbit, as desired. 

The ``in particular'' part follows from the normal basis theorem. 
\end{proof}

\subsection{Taking Invariant Parts}
In the previous subsections, for a monic polynomial $f\in \Q[x]$ which decomposes into linear factors over $L$, we have constructed $G$-equivariant maps 
\[\Rec(f;L)\overset{\psi_f}{\longrightarrow} L\otimes_\Q L
\overset{\varphi}{\longrightarrow} \Fun(G,L)
\overset{\ev}{\longrightarrow} \cA_L. \]
Recall that the map $\varphi$ is bijective and that the map ${\rm ev}$ is injective (Proposition \ref{prop:ev}).
By taking the subspaces of $G$-invariant elements, we obtain the $\Q$-linear maps 
\begin{equation}\label{eq:G-invariant}
\Rec(f;\Q)\overset{\psi_f}{\longrightarrow} (L\otimes_\Q L)^G
\overset{\varphi}{\longrightarrow} A(L)
\overset{\ev}{\longrightarrow} \cA_L^G, 
\end{equation}
where we set 
\[A(L)\coloneqq \Fun(G,L)^G=\{g\colon G\to L\mid \sigma(g(\tau))=g(\sigma\tau\sigma^{-1})
\text{ for }\forall \sigma,\tau\in G\}. \]

We show that $\cA_L^G$ is {isomorphic} to $\cA=\cA_\Q$. 
First, we consider the diagonal embedding 
$\bbF_p\to \prod_{\frp\mid p}O_L/\frp$ for each rational prime $p$, 
where $\frp$ runs over the primes of $L$ lying above $p$. 
Then the product of these embeddings 
\[\prod_{p\in\bP_\Q}\bbF_p\longrightarrow 
\prod_{p\in\bP_\Q}\Biggl(\prod_{\frp\mid p}O_L/\frp\Biggr)=\prod_{\frp\in\bP_L}O_L/\frp\]
defines, by passage to the quotient, a $\Q$-algebra homomorphism $\cA\to\cA_L$. 

\begin{proposition}\label{prop:cA_L^G}
The map $\cA\to\cA_L$ constructed above 
induces an isomorphism $\cA\cong\cA_L^G$ of $\Q$-algebras. 
\end{proposition}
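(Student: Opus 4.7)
The plan is to work prime-by-prime and verify three claims: the image of the map $\cA\to\cA_L$ lands in $\cA_L^G$, the map is injective, and every $G$-invariant element arises this way.

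Well-definedness should be a direct unwinding of the $G$-action on $\cA_L$. Given $(a_p)_p\in\prod_p\F_p$ with image $(b_\frp)_\frp$, where $b_\frp=a_p$ viewed in $O_L/\frp$ via $\F_p\hookrightarrow O_L/\frp$, I would compute the $\frp$-component $\sigma(b_{\sigma^{-1}(\frp)})$ of $\sigma\cdot(b_\frp)_\frp$ and note that it equals $a_p=b_\frp$, because $\sigma^{-1}(\frp)$ lies over the same rational prime $p$ and the isomorphism $\sigma\colon O_L/\sigma^{-1}(\frp)\to O_L/\frp$ restricts to the identity on the prime subfield $\F_p$. Injectivity is similarly routine: since $\F_p\hookrightarrow O_L/\frp$ is injective for every $\frp\mid p$, any $(a_p)_p$ whose image vanishes in $\cA_L$ must satisfy $a_p=0$ for all but finitely many $p$.

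For surjectivity, I would take $\beta=(b_\frp)_\frp\in\cA_L^G$ and restrict attention to the cofinite set of $\frp\in\bP_L$ that are unramified over $\Q$. For such $\frp$, the decomposition group $D_\frp=\{\sigma\in G\mid\sigma(\frp)=\frp\}$ maps isomorphically onto $\Gal\bigl((O_L/\frp)/\F_p\bigr)$. Applying the $G$-invariance of $\beta$ to $\sigma\in D_\frp$ (for which $\sigma^{-1}(\frp)=\frp$) gives $\sigma(b_\frp)=b_\frp$, so $b_\frp$ is fixed by the entire residue field Galois group, hence lies in $\F_p$. Next I would show that this $\F_p$-element is independent of the choice of $\frp\mid p$: for any other prime $\frp'\mid p$, pick $\tau\in G$ with $\tau^{-1}(\frp')=\frp$; then invariance gives $\tau(b_\frp)=b_{\frp'}$, and because $\tau$ restricts to the identity on the prime subfield, $b_\frp$ and $b_{\frp'}$ coincide once identified with their common copy of $\F_p$. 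Defining $a_p\in\F_p$ as this common value then produces a preimage of $\beta$.

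The main obstacle will be organizing the surjectivity argument cleanly, since it simultaneously requires identifying $b_\frp$ as an element of $\F_p\subset O_L/\frp$ and verifying that the resulting element of $\F_p$ is well-defined independently of the chosen $\frp$ over $p$. Both steps rely on unramifiedness (to control the decomposition group) and on the observation that every $\tau\in G$ fixes the prime subfield $\F_p$ pointwise inside each residue field, which is precisely what makes the $G$-invariance condition translate into descent to $\cA$.
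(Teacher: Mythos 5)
Your proof is correct and follows essentially the same route as the paper's: image in $\cA_L^G$ by inspection, injectivity from the fact that each $p$ has only finitely many $\frp$ above it, and surjectivity by showing that a $G$-invariant tuple $(b_\frp)_\frp$ has $b_\frp$ fixed by the decomposition group (hence lying in $\F_p$) and independent of the choice of $\frp\mid p$. The only difference is that you spell out the decomposition-group mechanism that the paper leaves implicit; incidentally, the surjection $D_\frp\twoheadrightarrow\Gal\bigl((O_L/\frp)/\F_p\bigr)$ suffices for the argument, so the restriction to unramified $\frp$ is harmless but not actually needed.
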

\begin{proof}
The injectivity of the map $\cA\to\cA_L$ follows from the equality 
\[\bigoplus_{p\in\bP_\Q}\Biggl(\prod_{\frp\mid p}O_L/\frp\Biggr)
=\bigoplus_{\frp\in\bP_L}O_L/\frp. \]
Moreover, its image is obviously contained in $\cA_L^G$. 
Conversely, let $(a_\frp)_\frp$ be an arbitrary element of $\cA_L^G$. 
This means that we have $\sigma(a_\frp)=a_{\sigma(\frp)}$ for all $\sigma\in G$ 
and all but finitely many $\frp\in\bP_L$. By replacing $a_\frp$ for {finitely many} $\frp$, 
we may assume that the above identity holds for all $\frp$. 
Then, for each $p\in\bP_\Q$, choose some $\frp\in\bP_L$ lying above $p$ and 
set $b_p\coloneqq a_\frp$. The $G$-invariance implies that 
this element $b_p\in O_L/\frp$ actually belongs to the subfield $\bbF_p$, 
and is independent of the choice of $\frp$. Thus we obtain an element $(b_p)_p\in\cA$, 
and by construction, this maps to the given $(a_\frp)_\frp\in\cA_L^G$. 
\end{proof}

In what follows, we identify $\cA_L^G$ with $\cA$ via the isomorphism 
in Proposition \ref{prop:cA_L^G}. 

\begin{definition}\label{def:cP^cA_L}
For each finite Galois extension $L$ over $\Q$, 
we define a $\Q$-subalgebra $\cP^\cA_L$ of $\cA$ by 
\[\cP^\cA_L\coloneqq \ev\bigl(A(L)\bigr). \]
\end{definition}

We now relate $r_f\bigl(\Rec(f;\Q))$ with $\mathcal{P}_L^{\mathcal{A}}$, where the map $r_f$ is defined in \eqref{eq:r_f}.

\begin{theorem}\label{thm:r_f vs P_L^A}
Let $f\in \Q[x]$ be a monic polynomial which decomposes into linear factors over $L$.
\begin{enumerate}
\item 
The map $r_f\colon\Rec(f;\Q)\to\cA$ is equal to $\ev\circ\varphi\circ\psi_f$, 
the composition of the maps in \eqref{eq:G-invariant}. 
In particular, we have $r_f\bigl(\Rec(f;\Q)\bigr)\subset\mathcal{P}_L^{\mathcal{A}}$.
\item The map $r_f:\Rec(f;\Q)\rightarrow  \mathcal{P}_L^{\mathcal{A}}$ is an isomorphism of $\Q$-vector spaces if $f$ is the minimal polynomial of a normal element of $L$.
\end{enumerate}
\end{theorem}
\begin{proof}
(1) Let $\xi_1,\ldots,\xi_\nu\in L$ be all the distinct roots of $f$.
For a sequence $(a_m)_m\in \Rec(f;\Q)$, if we write $a_m=\sum_{i,j}z_{i,j}\tbinom{m}{i}\xi_j^{m-i}$ with some $z_{i,j}\in L$ 
by using the basis \eqref{eq:basis of Rec(f;L)}, 
then for all but finitely many primes $p$, we have 
\[a_p=\sum_{i,j}z_{i,j}\tbinom{p}{i}\xi_j^{p-i}\equiv \sum_j z_{0,j}\xi_j^p
\equiv \sum_j z_{0,j}\phi_\frp(\xi_j) \mod\frp\]
with $\frp\in\bP_L$ lying above $p$. 
This shows the equality $r_f=\ev \circ \varphi\circ \psi_f$, 
and hence $r_f\bigl(\Rec(f;\Q)\bigr)\subset\ev\bigl(A(L)\bigr)=\mathcal{P}_L^{\mathcal{A}}$.

(2) If $f$ is the minimal polynomial of a normal element of $L$, then the $G$-equivariant map $\psi_f$ is an isomorphism by Proposition \ref{prop: condition on f}.
Hence the statement follows from (1). 
\end{proof}


\begin{corollary}\label{cor:cP^cA_0 and L}
\begin{itemize}
\item[(1)] We have that $\dim_\Q \mathcal{P}_L^{\mathcal{A}}=[L:\Q]$.
\item[(2)] The subset $\cP^\cA_0$ of $\cA$ is equal to the union of $\cP^\cA_L$, 
where $L$ runs over all finite Galois extensions of $\Q$. 
\end{itemize}
\end{corollary}
\begin{proof}
(1) Suppose that $f$ is the minimal polynomial of a normal element of $L$.
Then, by Theorem \ref{thm:r_f vs P_L^A} (2), we have $\dim_\Q \mathcal{P}_L^{\cA}=\dim_\Q \Rec(f;\Q)=\deg f= [L:\Q]$.

(2) The set $\cP^\cA_0$ is, by definition, the union of the images of maps $r_f$.
Therefore, the result follows from the ``in particular" part of Proposition \ref{prop: condition on f} and Theorem \ref{thm:r_f vs P_L^A}.
\end{proof}

Corollary \ref{cor:cP^cA_0 and L} (2) says that an element $\alpha$ of $\cA$ is a finite algebraic number if and only if there exists a finite Galois extension $L/\Q$ and $g\in A(L)$ such that ${\rm ev}(g)=\alpha$.
This was first proved in \cite[Theorem 2.2]{Rosen20}.

\subsection{Ring of Finite Algebraic Numbers} 
Although not necessary for the proof of the main results, we clarify the structure of $A(L)$ in order to highlight the difference with $L$.

\begin{proposition}\label{prop:product decomposition}
Let $R\subset G$ be a set of representatives from all conjugacy classes of $G$ and, 
for each $\varrho\in R$, let $C_G(\varrho)\coloneqq\{\sigma\in G\mid\sigma\varrho=\varrho\sigma\}$ 
be the centralizer of $\varrho$ in $G$. Then we have an isomorphism of $\Q$-algebras 
\[A(L)\cong \prod_{\varrho\in R}L^{C_G(\varrho)}. \]
In particular, the equality $\dim_\Q A(L)=[L:\Q]$ holds. 
\end{proposition}
\begin{proof}
We define the map $A(L)\to\prod_{\varrho\in R}L^{C_G(\varrho)}$ by sending $g\in A(L)$ 
to $\bigl(g(\varrho)\bigr)_{\varrho\in R}$. Indeed, for $\varrho\in R$ and $\sigma\in C_G(\varrho)$, 
the $G$-invariance of $g$ implies that $\sigma\bigl(g(\varrho)\bigr)=g(\sigma\varrho\sigma^{-1})=g(\varrho)$, 
hence $g(\varrho)$ is $C_G(\varrho)$-invariant. 
Conversely, for $(\xi_\varrho)_\varrho\in\prod_{\varrho\in R}L^{C_G(\varrho)}$, 
we define $g\colon G\to L$ by 
\[g(\sigma\varrho\sigma^{-1})\coloneqq \sigma(\xi_\varrho)\quad 
\text{for $\sigma\in G$ and $\varrho\in R$}. \]
This is well-defined since $\xi_\varrho$ is $C_G(\varrho)$-invariant, 
and the function $g$ is $G$-invariant. 
These two maps are inverse to each other. 

The latter part follows  from the equality $\dim_\Q L^{C_G(\varrho)}=\#G/\# C_G(\varrho)=\#[\varrho]$, 
where 
\begin{equation}\label{eq:conjugacy_rho}
[\varrho]:=\{\sigma \varrho \sigma^{-1}\mid \sigma\in G\}
\end{equation}
denotes the conjugacy class of $\varrho$. 
\end{proof}

\begin{remark}
Since $A(L)\cong \cP^\cA_L$, we have obtained an alternative proof of 
the equality $\dim_\Q\cP^\cA_L=[L:\Q]$. 
\end{remark}

In \cite{Rosen20}, it is implicitly noted that $\cP^\cA_0$ forms a $\Q$-subalgebra of $\cA$. 
We give a proof of it for the convenience of the reader.

\begin{proposition}\label{prop:Q-algebra cP^cA_0}
The subset $\cP^\cA_0$ of $\cA$ is a $\Q$-subalgebra, 
which is isomorphic to the direct limit $\varinjlim_{L}A(L)$, 
where $L$ runs over all finite Galois extensions of $\Q$ and 
the transition maps $A(L)\to A(L')$ are induced from the natural maps $\Fun(G,L)\to\Fun(G',L')$ 
for inclusions $L\subset L'$, {where $G'$ denotes the Galois group of $L'/\Q$. }
\end{proposition}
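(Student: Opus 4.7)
The plan is to construct a compatible family of injections $A(L)\hookrightarrow\cA$, pass to the colimit, identify its image with $\cP^\cA_0$ using Proposition \ref{prop:cP^cA_0 and L}, and deduce injectivity from the earlier fact that $\ev\colon\Fun(G,L)\to\cA_L$ is injective.

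First I would check that $\{A(L)\}_L$ forms a well-defined direct system. For a tower $\Q\subset L\subset L'$ with $G=\Gal(L/\Q)$ and $G'=\Gal(L'/\Q)$, the restriction $\pi\colon G'\twoheadrightarrow G$ is surjective, and combined with the inclusion $L\hookrightarrow L'$ it yields a $\Q$-algebra homomorphism $\Fun(G,L)\to\Fun(G',L')$ given by $g\mapsto g\circ\pi$. A direct computation shows this map is $G'$-equivariant when $G'$ acts on $\Fun(G,L)$ through $\pi$, so it restricts to a transition map $A(L)\to A(L')$. The surjectivity of $\pi$ makes each such transition map injective, and functoriality in nested towers is immediate.

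Next I would verify the compatibility $\ev_{L'}(g\circ\pi)=\ev_L(g)$ for $g\in A(L)$, with both sides viewed in $\cA$ via the identification of Proposition \ref{prop:cA_L^G}. The key input is that for all but finitely many $p\in\bP_\Q$ (those unramified in $L'/\Q$), if $\frp'\in\bP_{L'}$ lies above $\frp\in\bP_L$ above $p$, then the Frobenius $\phi_{\frp'}\in G'$ restricts to $\phi_\frp\in G$, so $g(\pi(\phi_{\frp'}))=g(\phi_\frp)$. Since $g(\phi_\frp)\in L\subset L'$ and its reduction modulo $\frp'$ agrees with its reduction modulo $\frp$ under the natural embedding $O_L/\frp\hookrightarrow O_{L'}/\frp'$, unwinding the identifications $\cA\cong\cA_L^G$ and $\cA\cong\cA_{L'}^{G'}$ gives the equality in $\cA$. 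The universal property of the direct limit then produces a $\Q$-algebra homomorphism $\Psi\colon\varinjlim_L A(L)\to\cA$ whose image is $\bigcup_L\ev_L(A(L))=\bigcup_L\cP^\cA_L$, which equals $\cP^\cA_0$ by Proposition \ref{prop:cP^cA_0 and L}.

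For injectivity, suppose a class in the colimit represented by some $g\in A(L)$ satisfies $\Psi([g])=0$. Then $\ev_L(g)=0$ already in $\cA_L$, and the injectivity of $\ev\colon\Fun(G,L)\to\cA_L$ established earlier forces $g=0$. Hence $\Psi$ is a bijection of $\varinjlim_L A(L)$ onto $\cP^\cA_0$, completing the proof. The main obstacle is the compatibility check in the second step: it requires simultaneously tracking the identifications $\cA\cong\cA_L^G\subset\cA_L$ at both levels of the tower and confirming that restriction of Frobenius elements and reduction modulo primes interact as expected; everything else is formal.
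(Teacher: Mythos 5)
Your proof is correct and follows essentially the same route as the paper: both establish the compatibility of the $\ev$ maps via the Frobenius restriction property $\phi_{\frp'}|_L=\phi_\frp$ (the paper phrases this as a commutative diagram of $G'$-equivariant maps and then takes $G'$-invariants), and both conclude by combining the injectivity of $\ev$ from Proposition 2.2 with the fact that $\cP^\cA_0=\bigcup_L\cP^\cA_L$ from Proposition \ref{prop:cP^cA_0 and L}. You make the injectivity of the transition maps and of the resulting map $\Psi$ slightly more explicit, but the substance is the same.
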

\begin{proof}
For any inclusion $L\subset L'$ of Galois extensions over $\Q$, 
we show that there is a commutative diagram 
\[\begin{CD}
\Fun(G,L) @>\ev>> \cA_L \\ 
@VVV @VVV \\
\Fun(G',L') @>\ev>> \cA_{L'}
\end{CD}\]
of $G'$-equivariant maps.
Then the statement follows by taking $G'$-invariant parts 
and by passage to the direct limit. 

The left vertical map is given by the composition with the natural surjection $G'\twoheadrightarrow G$ 
and the inclusion $L\hookrightarrow L'$, and the right vertical arrow is defined 
similarly to the map $\cA\to\cA_L$ of Proposition \ref{prop:cA_L^G}. 
The commutativity of the above diagram can be verified as follows. 
Take $g\colon G\to L$ and let $g'$ be the composition 
$G'\twoheadrightarrow G\xrightarrow{g}L\hookrightarrow L'$. 
For $\frp\in\bP_L$ such that ``$g(\phi_\frp)\bmod\frp$'' is defined, 
let $\frp'$ be an arbitrary prime of $L'$ lying above $\frp$. 
Then we have $\phi_{\frp'}|_L=\phi_\frp$ by the definition of Frobenius automorphisms, 
and hence $g'(\phi_{\frp'})=g(\phi_\frp)$, which shows the desired commutativity. 
\end{proof}

It is also easily checked that
\[\varinjlim_{L}A(L)\cong A(\overline{\Q})\coloneqq
\Fun_{\mathrm{cont}}(\Gal(\overline{\Q}/\Q),\overline{\Q})^{\Gal(\overline{\Q}/\Q)}, \]
where $\Fun_{\mathrm{cont}}(\Gal(\overline{\Q}/\Q),\overline{\Q})$ denotes 
the ring of continuous functions $\Gal(\overline{\Q}/\Q)\to\overline{\Q}$ 
with respect to the Krull topology on $\Gal(\overline{\Q}/\Q)$ and the discrete topology 
on $\overline{\Q}$, equipped with a natural action of $\Gal(\overline{\Q}/\Q)$.

\begin{remark}\label{rem:motivic}
From the theory of motivic periods \cite[\S5.1]{Brown17}, the $\Q$-algebra $\mathcal{P}_0^{\mathcal{A}}$ is obtained as the image of the $\Q$-algebra $\mathcal{P}^{\mathfrak{dr}}_{\mathcal{AM}(\Q)}$ of de Rham periods of the Tannakian category $\mathcal{AM}(\Q)$ of Artin motives under the $\mathcal{A}$-valued period map (see \cite{Rosen} for further details). 
This is an analogy of the fact that the $\Q$-algebra $\overline{\Q}$ of algebraic numbers is the image of the $\Q$-algebra $\mathcal{P}^{\mathfrak{m}}_{\mathcal{AM}(\Q)}$ of motivic periods of $\mathcal{AM}(\Q)$ under the $\C$-valued period map \cite{Brown17}.
Although we do not use this construction in this paper, it plays an important role in understanding that the $\Q$-algebra $\mathcal{P}_0^{\mathcal{A}}$ (resp.~$\mathcal{P}_L^{\mathcal{A}}$) is the true analogue of $\overline{\Q}$ (resp.~$L$) in $\mathcal{A}$ (see \cite[\S4]{Rosen20} for more details).
\end{remark}

\begin{remark}\label{rem:mzv}
The ring $\mathcal{A}$ appears in the recent works on finite multiple zeta values introduced by Kaneko and Zagier \cite{KanekoZagier,Kaneko19}; they conjectured a certain explicit correspondence between the finite multiple zeta value, an element in $\mathcal{A}$, and the symmetric multiple zeta value, an element in the $\Q$-algebra generated by multiple zeta values modulo $\pi^2$.
Similarly to finite algebraic numbers, finite (resp. symmetric) multiple zeta values are obtained as the image of the ring $\mathcal{P}^{\mathfrak{dr}}_{\mathcal{MT}(\Z)}$ of de Rham periods (resp. the ring $ \mathcal{P}^{\mathfrak{m}}_{\mathcal{MT}(\Z)}$ of motivic periods) of the Tannakian category $\mathcal{MT}(\Z)$ of mixed Tate motives over $\Z$ under the $\mathcal{A}$-valued (resp. the $\C$-valued) period map; see \cite{Rosen}.
In contrast to the fact that there are non-canonical isomorphisms $\mathcal{P}^{\mathfrak{dr}}_{\mathcal{MT}(\Z)}\overset{\sim}{\rightarrow} \mathcal{P}^{\mathfrak{m}}_{\mathcal{MT}(\Z)}$ (see \cite[\S2.9]{Brown14}), the $\Q$-algebra $\mathcal{P}^{\mathfrak{dr}}_{\mathcal{AM}(\Q)}$ ($\cong\mathcal{P}_0^{\mathcal{A}}$) is not isomorphic to the $\Q$-algebra $\mathcal{P}^{\mathfrak{m}}_{\mathcal{AM}(\Q)}$ 
($\cong \overline{\Q}$).
So, for finite algebraic numbers, we can not expect the similar correspondence as in the conjecture of Kaneko and Zagier.
\end{remark}

\section{Characterization of Idempotents by Recurrent Sequences}

In this section, we first prove Theorem \ref{thm:intro} and then discuss an explicit method to compute the linear recurrent sequence $(a_m(C))_m$ for any conjugacy class $C$ in the Galois group $\Gal(L/\Q)$ such that 
\begin{equation}\label{eq:conjugacy_class_idempotent} 
a_p(C)\equiv \begin{cases} 
1 \mod p & (\text{if $\phi_\frp\in C$ for $\frp\mid p$}),\\ 
0 \mod p & (\text{otherwise})
\end{cases}
\end{equation}
holds for all but finitely many primes $p$.
Then we will illustrate it by some examples.

\subsection{Proof of Theorem \ref{thm:intro}}
Motivated by the right-hand side of \eqref{eq:conjugacy_class_idempotent}, for $\varrho \in G$ and a prime $p$ which is unramified in $L/\Q$, we define
\[ I_{\varrho}(p) \coloneqq \begin{cases} 
1 & (\text{if} \ \phi_{\frp} \in [\varrho] \ \text{for $\frp\mid p$}), \\ 
0 & (\text{otherwise}),
\end{cases}\]
where $[\varrho]$ denotes the conjugacy class of 
$\varrho$ in $G$ as in \eqref{eq:conjugacy_rho}.
We also let 
\[e^\cA_\varrho \coloneqq(I_{\varrho}(p))_p \in \cA.\]




\begin{proof}[{Proof of Theorem \ref{thm:intro}}]
Define $e_\varrho\colon G\to L$ by 
\begin{equation}\label{eq:def_e_rho} 
e_\varrho(\tau) \coloneqq \begin{cases} 
1 & (\text{if} \ \tau \in [\varrho]), \\ 
0 & (\text{otherwise}).
\end{cases}
\end{equation}
One finds that $e_\varrho \in A(L)$.
By definition, we have that ${\rm ev}(e_\varrho) = e_\varrho^{\cA}$, which implies $e_\varrho^{\cA}\in \cP_L^{\cA}$.
Therefore, the result follows from Theorem \ref{thm:r_f vs P_L^A} (2).
\end{proof}

It should be noted that the above element $e_\varrho\in A(L)$ becomes the idempotent (i.e., $e_\varrho^2=e_\varrho$) corresponding to the unit element of the component $L^{C_G(\varrho)}$ via the isomorphism $A(L)\cong\prod_{\varrho\in R} L^{C_G(\varrho)}$ given in Proposition \ref{prop:product decomposition}.

\subsection{Methodology}\label{sec:method}

{We are interested not only in the existence of a sequence $\ba\in\Rec(f;\Q)$ such that $e_\varrho^\cA = r_f(\ba)$, but also in finding such a sequence explicitly.}
In the following, we explain our method to compute the sequence $\ba$.

\begin{proposition}\label{prop:e_rho_linear_equation}
Let $L$ be a finite Galois extension over $\Q$ and $f$ a monic polynomial in $\Q[x]$ which decomposes into linear factors over $L$.
Suppose that all roots $\xi_1,\ldots,\xi_d$ of $f$ are simple and that $e_\varrho^{\cA}\in r_f\big({\rm Rec}(f;\Q)\big)$.
Then, there exists a solution $(z_1,\ldots,z_d)\in L^d$ to the linear equations
\begin{align}\label{eq:lin_eq}
\sum_{j=1}^{d}z_{j} \tau(\xi_{j})=
\begin{cases}
1 & (\tau\in [\varrho]), \\ 
0 & (\tau\not\in[\varrho]).
\end{cases}
\end{align}
With this solution, letting $a_m= \sum_{j=1}^{d}z_{j} \xi_j^m \ (m\ge0)$, we obtain
\[ e_\varrho^{\cA} =r_f\big((a_m)_m\big).\]
\end{proposition}
\begin{proof}
Since all roots of $f$ are simple, the set $\{(\xi_j^m)_m\mid j=1,\ldots,d\}$ is a basis of $\Rec(f;L)$ (see \eqref{eq:basis of Rec(f;L)}).
Accordingly, for any $\boldsymbol{a}\in{\rm Rec}(f;\Q)$, we can write $\boldsymbol{a}=\biggl(\displaystyle\sum_{j=1}^d z_j \xi_j^m\biggr)_m$ for some $z_j\in L$. 
For such $\boldsymbol{a}$, one has
\[
\begin{array}{cccccccc}
\Rec(f;\Q)&\overset{\psi_f}{\longrightarrow} & 
(L\otimes L)^{G} & \overset{\varphi}{\longrightarrow} & 
A(L),\\
\boldsymbol{a} & \longmapsto & 
\displaystyle\sum_{j=1}^d z_j \otimes \xi_j &\longmapsto & 
\Biggl(\tau \mapsto\displaystyle \sum_{j=1}^d z_j \tau(\xi_j)\Biggr).
\end{array}
\]
Hence the assumption $e_\varrho^{\cA}\in r_f\big({\rm Rec}(f;\Q)\big)$ shows the existence of solutions to the linear system \eqref{eq:lin_eq} in $L^d$, 
and for such a solution, we get $e_\varrho^{\cA} =r_f(\boldsymbol{a})$ as desired.
\end{proof}

If all roots of $f$ are simple and form a basis of $L$ over $\Q$ 
(equivalently, if $f$ is the minimal polynomial of a normal element of $L$ over $\Q$), 
then by Theorem \ref{thm:r_f vs P_L^A} (2), 
the map $r_f\colon\Rec(f;\Q)\to\cP^\cA_L$ is an isomorphism.
In this case, we always have a unique solution to the equation \eqref{eq:lin_eq}, 
which gives rise to a solution to $e_\varrho^{\cA} =r_f(\boldsymbol{a})$.
In the computation, we do not need to find the above solution $(z_1,\ldots,z_d)\in L^d$ explicitly.
For example, {in the case} $\varrho=\id$, let $x_{m}$ ($m \ge 0$) be the determinant
\begin{align*}
x_{m}=
\begin{vmatrix}
\xi_{1}^{m} & \xi_{2}^{m} & \cdots & \xi_{d}^{m} \\
\tau_{2}(\xi_{1}) & \tau_{2}(\xi_{2}) & \cdots & \tau_{2}(\xi_{d}) \\
\vdots & \vdots & \ddots & \vdots \\
\tau_{d}(\xi_{1}) & \tau_{d}(\xi_{2}) & \cdots & \tau_{d}(\xi_{d})
\end{vmatrix},
\end{align*}
where we set $\{\tau_{1}, \ldots , \tau_{d}\}=G$ with $\tau_{1}=\mathrm{id}$.
Then, by Cramer's rule, we have $e^{\cA}_{\rm id} = r_f\big((a_m)_m\big)$ with $a_{m}=x_{m}/x_{1}$.

\subsection{Examples}\label{sec:ex}
Let us work out some examples of the solution to the equation $e^{\cA}_\varrho = r_f(\boldsymbol{a})$.
For simplicity, we write 
\[e^\cA\coloneqq e_\id^\cA.\] 
Note that a sequence $\ba\in\Rec(f;\Q) $ such that $e^\cA=r_f(\ba)$ satisfies the congruences \eqref{eq:splitting_completely} for all but finitely many $p$, and hence, gives an explicit description of the set ${\rm Spl}(L)$ of primes splitting completely in $L/\Q$.
We also remark that the argument in the ring $\cA$ does not explicitly determine 
a finite set of exceptional primes. 
This problem is discussed in the next section.

\begin{example}[Quadratic fields]
For a square-free integer $D$, let $L=\Q(\sqrt{D})$ and write $G=\{{\rm id},\tau\}$.
In this case, since $p$ splits completely in $L$ if and only if $\bigl(\frac{D}{p}\bigr)=1$, the idempotents $e^{\mathcal{A}}_{\varrho}\in \mathcal{P}_L^{\mathcal{A}}$ are simply written as 
\[ e^{\mathcal{A}} = e^{\mathcal{A}}_{\rm id}=\frac{1}{2}\left(1+\left(\frac{D}{p}\right)\right)_p \quad \mbox{and}\quad e^{\mathcal{A}}_{\tau}=\frac{1}{2}\left(1-\left(\frac{D}{p}\right)\right)_p .\]
Now suppose that $\xi\in L$ is a normal element of $L$.
Its minimal polynomial $f=(x-\xi)(x-\tau(\xi))=x^2-c_1x+c_2\in \Q[x]$ satisfies $c_1\neq0$.
Note that $\xi=\frac{c_1\pm \sqrt{c_1^2-4c_2}}{2}$.
With this, it holds that
\[ x_0=\begin{vmatrix} 1&1\\ \tau(\xi)&\xi\end{vmatrix}= \sqrt{c_1^2-4c_2}, \quad x_1=  \begin{vmatrix} \xi&\tau(\xi) \\ \tau(\xi)&\xi\end{vmatrix} = c_1\sqrt{c_1^2-4c_2}.\]
Then the solution to $e^{\mathcal{A}} = r_f(\ba)$ is given by $\ba\in {\rm Rec}(f;\Q)$ with the initial values $a_0=\frac{1}{c_1}, \ a_1=1$.
\end{example}

\begin{example}[Cyclic cubic fields]
Let $L$ be a cyclic cubic extension over $\Q$ and
$\tau$ the element of $G$ of order $3$, i.e., $G=\{1, \tau, \tau^{2}\}$.
Set $\tau_{j}=\tau^{j-1}$ for $j=1, 2, 3$.
Suppose that $\xi \in L$ is a normal element of $L$.
Let $f(x)=x^{3}-c_{1}x^{2}+c_{2}x-c_{3}$ be the minimal polynomial of $\xi$ over $\mathbb{Q}$ ($c_1\neq0$).
Then we see that
\begin{align*}
x_{0}&=3c_{2}-c_{1}^{2}, \\
x_{1}&=c_{1}(3c_{2}-c_{1}^{2}), \\
x_{2}&=4c_{1}^{2}c_{2}-3c_{1}c_{3}-2c_{2}^{2}-c_{1}^{4}.
\end{align*}
Therefore, the corresponding sequence $(a_{m})_m\in \Rec(f;\Q)$ to $e^{\mathcal{A}} \in \mathcal{P}_L^{\mathcal{A}}$ is determined by the initial values
\begin{align*}
a_{0}=\frac{1}{c_{1}}, \quad
a_{1}=1, \quad
a_{2}=\frac{4c_{1}^{2}c_{2}-3c_{1}c_{3}-2c_{2}^{2}-c_{1}^{4}}{c_{1}(3c_{2}-c_{1}^{2})}.
\end{align*}
\end{example}

\begin{remark}
For every cyclic cubic field $L/\Q$, there is a rational number $t\in \Q$ such that $L$ is the splitting field of Shanks' polynomial \cite{S74}
\[ f_t(x)=x^3-tx^2-(t+3)x-1.\]
When $t\neq0$, the above result shows that the sequence $(a_{m})_m\in \Rec(f_t;\Q)$ with the initial values
\[a_0=\frac{1}{t},\quad a_1= 1,\quad a_2 = \frac{2}{t}+1+t\]
satisfies $e^{\mathcal{A}}=r_{f_t}( (a_m)_m) $.
Note that the splitting field of $f_0$ coincides with $\Q(\zeta_9+\zeta_9^{-1})$ which is the splitting field of both $f_3$ and $f_{54}$; see \cite{Hoshi}.
\end{remark}

As an example of a decomposition law, we prove Proposition \ref{prop:S_3_intro}, the case $L=\mathbb{Q}(\sqrt[3]{2}, e^{2\pi \sqrt{-1}/3})$ whose Galois group is the symmetric group $\mathfrak{S}_{3}$.

\begin{proof}[Proof of Proposition \ref{prop:S_3_intro}]
Let $\omega=e^{\frac{2\pi \sqrt{-1}}{3}}$.
The element $\xi=\omega+2^{\frac{1}{3}}+2^{\frac{2}{3}}\omega $ is a normal element of $L$ and its minimal polynomial coincides with $f(x)=x^{6}+3x^{5}+12x^{4}+25x^{3}+60x^{2}+51x+127$.
By Theorem \ref{thm:r_f vs P_L^A}, the map $r_f:\Rec(f;\Q)\rightarrow \cP_L^{\cA}$ is an isomorphism.

Define the automorphisms $\sigma_1,\sigma_2$ of the Galois group $G$ for the generators $2^{\frac13}$ and $\omega$ of $L$ by 
\[ \sigma_1\bigl(2^{\frac13}\bigr)= 2^{\frac13}\omega, \ \sigma_1(\omega)=\omega \ \mbox{and} \ \sigma_2\bigl(2^{\frac13}\bigr)= 2^{\frac13}, \ \sigma_2(\omega)=\omega^2 .\]
These are representatives of conjugacy classes of $G$: $G=[{\rm id}]\cup [\sigma_1] \cup [\sigma_2]$ with $\# [\sigma_1]=2, \#[\sigma_2]=3$.
All elements of $G$ are $\tau_1={\rm id}, \ \tau_2=\sigma_1,\ \tau_3=\sigma_1^2, \ \tau_4=\sigma_2, \ \tau_5=\sigma_1\sigma_2, \ \tau_6=\sigma_1^2\sigma_2$. 
Let $\xi_j=\tau_j(\xi)\ (j=1,2,\ldots,6)$, a basis of $L$.
The group homomorphism $\rho: {\rm Gal}(L/\Q)\rightarrow {\rm GL}_2(\C)$ given by
\[\sigma_1\mapsto \begin{pmatrix} \omega & 0 \\ 0 & \omega^{-1}\end{pmatrix} ,\quad \sigma_2 \mapsto\begin{pmatrix} 0 & 1 \\ 1 & 0\end{pmatrix} \]
is an irreducible Artin representation, and hence, for any primes $p$ which is unramified in $L/\Q$ we get 
\[ b_p=\Tr(\rho(\phi_\frp)) = \begin{cases} 2 & ( \phi_\frp \in [{\rm id}]\ \mbox{for $\frp\mid p$}), \\ -1 & (\phi_\frp \in [\sigma_1]\ \mbox{for $\frp\mid p$}), \\ 0 & (\phi_\frp \in [\sigma_2]\ \mbox{for $\frp\mid p$}).\end{cases}\]
Theorem \ref{thm:intro} shows that $(b_p)_p=2e_{\rm id}^{\cA}-e_{\sigma_1}^{\cA}\in \cP_L^{\cA}$.
By Theorem \ref{thm:r_f vs P_L^A} (2), there is a sequence $\ba=(a_m)_m\in \Rec(f;\Q)$ such that $r_f(\ba)=(b_p)_p$.
Similarly to Proposition \ref{prop:e_rho_linear_equation}, we now consider the system of linear equations
\begin{equation}\label{eq:linear_eq_S3}
\begin{pmatrix} \tau_1(\xi_1) & \cdots &\tau_1(\xi_6) \\ \tau_2(\xi_1)&\cdots & \tau_2(\xi_6) \\ \vdots & \cdots& \vdots \\ \tau_6(\xi_1) & \cdots & \tau_6(\xi_6) \end{pmatrix} \begin{pmatrix}z_1\\ z_2\\ \vdots \\ z_6 \end{pmatrix} = \begin{pmatrix} 2\\ -1\\ -1\\ 0 \\0\\0\end{pmatrix}.
\end{equation}
Note that, as a permutation on the set $\{\xi_1,\ldots,\xi_6\}$, the automorphisms $\sigma_1,\sigma_2$ are viewed as 
\[ \sigma_1 = \begin{pmatrix}1&2&3&4&5&6 \\2&3&1&5&6&4\end{pmatrix},\quad \sigma_2 = \begin{pmatrix} 1&2&3&4&5&6\\ 4&6&5&1&3&2\end{pmatrix},\]
so the coefficient matrix of \eqref{eq:linear_eq_S3} is
\[\begin{pmatrix} 
\xi_1&\xi_2&\xi_3&\xi_4&\xi_5&\xi_6\\
\xi_2&\xi_3&\xi_1&\xi_5&\xi_6&\xi_4\\
\xi_3&\xi_1&\xi_2&\xi_6&\xi_4&\xi_5\\
\xi_4&\xi_6&\xi_5&\xi_1&\xi_3&\xi_2\\
\xi_5&\xi_4&\xi_6&\xi_2&\xi_1&\xi_3\\
\xi_6&\xi_5&\xi_4&\xi_3&\xi_2&\xi_1
\end{pmatrix}.\]
Let $z=\frac{1}{54} (2 \xi_1-\xi_2-\xi_3-2 \xi_4-2 \xi_5+4 \xi_6)$. 
Then one can check that 
\[ (z_1,z_2,\ldots,z_6)=(\tau_1(z),\tau_2(z),\ldots,\tau_6(z))\]
is the unique solution to \eqref{eq:linear_eq_S3}. 
With this, we obtain
\[ (a_m)_m = \Biggl(\sum_{j=1}^6 \tau_j(z) \xi_j^m \Biggr)_m \in \Rec(f;\Q),\]
which satisfies $a_p\equiv b_p\bmod p$ for all but finitely many primes $p$.
In particular, we see that the initial values are given by $(a_0,a_1,\ldots,a_5)=(0,2,-4,-3,20,-40)$, from which the desired result follows.
\end{proof}

Several values of $a_p\bmod p$ for $(a_m)_m$ defined in Proposition \ref{prop:S_3_intro} are listed as follows.
\[
\begin{array}{c|ccccccccccccccccc}\hline
p & 23 & 29 & 31 & 37 & 41 & 43 & 47 & 53 & 59 & 61 & 67 & 71 \\ \hline
a_p\bmod p& 0 & 0 & 2 & -1 & 0 & 2 & 0 & 0 & 0 & -1 & -1 & 0\\ \hline
\end{array}\]
In fact, one can check that these values agree with $b_p$. 
In particular, since $b_{31}=b_{43}=2$, two primes $31$ and $43$ split completely. 
However, due to the existence of a finite set of exceptional primes, 
we cannot deduce this fact from our argument.
This inconvenience will be gotten rid of in the next section 
by considering the ``$S$-integral refinement'' of the theory of finite algebraic numbers. 

\section{Determination of Exceptional Primes}
In \S\ref{sec:method} we have explained how to compute the solution $\ba\in\Rec(f;\Q)$ 
to the equation $e^{\cA}_\varrho=r_f(\ba)$.
However, as we have seen above, this method cannot provide information 
about a finite set $S$ of exceptional primes. 
The goal of this section is to control the set $S$ 
by refining the construction given in \S2 over the ring of $S$-integers.

\subsection{The Ring $\cA_{L,S}$}
For a finite set $S$ of prime numbers, let 
\[\Z_S\coloneqq \Z\bigl[p^{-1}\bigm| p\in S\bigr]\]
be the ring of $S$-integers. We set 
\[\cA_S\coloneqq \prod_{p\in\bP_\Q\setminus S}\F_p
\cong\Z_S\otimes_\Z\prod_{p\in\bP_\Q}\F_p, \]
which is a $\Z_S$-algebra.
One can show that
\[\varinjlim_S\Z_S=\Q\quad \text{and}\quad \cA=\varinjlim_S\cA_S, \]
where $S$ runs through {all finite sets} of prime numbers and transition maps are 
the injections $\Z_S\to\Z_{S'}$ and the projections $\cA_S\to\cA_{S'}$ for $S\subset S'$, 
respectively. 
From the latter equality, we have the canonical surjection $\cA_{S}\rightarrow \cA$ sending each element to its equivalence class.
We also set 
\[O_{L,S}\coloneqq O_L\otimes_\Z\Z_S,\qquad  
\cA_{L,S}\coloneqq \prod_{\frp\in\bP_L\setminus S_L}O_L/\frp
\cong O_{L,S}\otimes_{O_L}\prod_{\frp\in\bP_L}O_L/\frp, \]
where $S_L$ denotes the set of primes of $L$ lying above elements of $S$. 
Then we have 
\[\varinjlim_S O_{L,S}=L\quad \text{and}\quad \cA_L=\varinjlim_S\cA_{L,S}. \]
Note that each $\cA_{L,S}$ admits a natural $G$-action, 
which induces the $G$-action on $\cA_L$ by passage to the inductive limit. 

\subsection{Modules and Homomorphisms over $O_{L,S}$}
In this subsection, we construct the $G$-equivariant homomorphisms 
\[\Rec(f;O_{L,S})
\overset{\psi_{f,S}}{\longrightarrow}O_{L,S}\otimes O_{L,S}
\overset{\varphi_S}{\longrightarrow}\Fun(G,O_{L,S})
\overset{\ev_S}{\longrightarrow}\cA_{L,S}\]
of $O_{L,S}$-modules, under some assumptions on $S$ and $f$. 

First let us consider the map 
\[\varphi_S\colon O_{L,S}\otimes O_{L,S}\longrightarrow \Fun(G,O_{L,S});\ 
\xi\otimes\eta\longmapsto\bigl(\tau\mapsto\xi\tau(\eta)\bigr),\]
which is the $S$-integral version of the isomorphism $\varphi\colon L\otimes L\to\Fun(G,L)$. 
This is a homomorphism of $O_{L,S}$-algebras, and is $G$-equivariant. 

\begin{proposition}\label{prop:varphi S}
If $S$ contains all primes which ramify in $L/\Q$, then the map $\varphi_S$ is an isomorphism.
\end{proposition}
\begin{proof}
Recall that the discriminant of the number field $L$ is defined by 
\begin{equation}\label{eq:def_disc_L}
\disc(L)=\bigl(\det\bigl(\tau(\omega_j)\bigr)_{\tau,j}\bigr)^2, 
\end{equation}
where $\{\omega_1,\ldots,\omega_n\}$ is a $\Z$-basis of $O_L$. 
Here, $\bigl(\tau(\omega_j)\bigr)_{\tau,j}$ denotes the $n\times n$ matrix 
whose $(i,j)$-entry is $\tau_i(\omega_j)$, with some permutation $\tau_1,\ldots,\tau_n$ 
of elements of $G$; its square is independent of the permutation. 

By Dedekind's theorem on ramification and the assumption on $S$, 
the discriminant $\disc(L)$ is invertible in $\Z_S$, hence in $O_{L,S}$. This means that the functions 
$\tau\mapsto\tau(\omega_j)$ for $j=1,\ldots,n$ form an $O_{L,S}$-basis of $\Fun(G,O_{L,S})$. 
On the other hand, the elements $1\otimes\omega_j$ form an $O_{L,S}$-basis 
of $O_{L,S}\otimes O_{L,S}$. 
Therefore, $\varphi_S$ maps a basis to a basis, hence is an isomorphism. 
\end{proof}

In the following, we always assume that $S$ contains all primes ramifying in $L/\Q$. 
Then we can also define the homomorphism of $O_{L,S}$-algebras 
\[\ev_S\colon \Fun(G,O_{L,S})\longrightarrow \cA_{L,S}\]
by 
\[\ev_S(g)\coloneqq \bigl(g(\phi_\frp)\bmod\frp\bigr)_\frp. \]
Note that the $L$-algebra homomorphism $\ev\colon\Fun(G,L)\to\cA_L$ 
given in Definition \ref{defn:ev} is obtained from the above maps by passage to the inductive limit 
$\varinjlim_S$. 

\begin{proposition}\label{prop:ev S}
The map $\ev_S\colon \Fun(G,O_{L,S})\longrightarrow \cA_{L,S}$ is $G$-equivariant and injective. 
\end{proposition}
\begin{proof}
This is shown in the proof of Proposition \ref{prop:ev}. 
\end{proof}

Finally, let us construct the $S$-integral version of $\psi_f\colon\Rec(f;L)\to L\otimes L$. 
Let $f(x)=x^d+c_1x^{d-1}+\cdots+c_d$ be a monic polynomial with coefficients in $\Z_S$. 
For any $\Z_S$-algebra $R$, let $\Rec(f;R)$ denote the $R$-module of sequences $(a_m)_{m}$ 
in $R$ satisfying the linear recurrence relation $a_m+c_1a_{m-1}+\cdots+c_da_{m-d}=0$ for all $m\ge d$. 

In the following, we assume that the monic polynomial $f\in \Z_S[x]$ of degree $d$ has $d$ distinct roots $\xi_1,\ldots,\xi_d$, 
all belonging to $L$. 
Note that $f$ is not necessarily irreducible over $\Q$ at this stage. 

\begin{proposition}\label{prop:alpha_j^m S}
\begin{enumerate}
\item For $j=1,\ldots,d$, the sequence $(\xi_j^m)_m$ is an element of $\Rec(f;O_{L,S})$. 
\item The elements $(\xi_j^m)_m$ for $j=1,\ldots,d$ form an $O_{L,S}$-basis of $\Rec(f;O_{L,S})$ 
if and only if the discriminant $\disc(f)=\prod_{i<j}(\xi_i-\xi_j)^2$ of the polynomial $f$ 
is invertible in $\Z_S$. 
\end{enumerate}
\end{proposition}
\begin{proof}
\begin{enumerate}
\item Since $O_{L,S}$ is the integral closure of $\Z_S$ in $L$, we have $\xi_j\in O_{L,S}$. 
The recurrence relation of $(\xi_j^m)_m$ is obvious. 

\item Note that the map $(a_m)_m\mapsto (a_0,\ldots,a_{d-1})$ gives an isomorphism 
$\Rec(f;R)\to R^d$ of $R$-modules. Thus $(\xi_j^m)_m$ ($j=1,\ldots,d$) forms a basis 
if and only if the Vandermonde determinant 
\[\det\begin{pmatrix}
1 & \xi_1 & \ldots & \xi_1^{d-1}\\
1 & \xi_2 & \ldots & \xi_2^{d-1}\\
\vdots&\vdots&\ddots&\vdots\\
1 & \xi_d & \ldots & \xi_d^{d-1}
\end{pmatrix}\]
is invertible in $O_{L,S}$. Since its square is equal to $\disc(f)$, the statement follows. \qedhere
\end{enumerate}
\end{proof}

By Proposition \ref{prop:alpha_j^m S}, supposing that $\disc(f)\in\Z_S^\times$, we can define the $O_{L,S}$-linear map 
\[\psi_{f,S}\colon\Rec(f;O_{L,S})\longrightarrow O_{L,S}\otimes O_{L,S};\ 
(\xi_j^m)_m\longmapsto 1\otimes \xi_j. \]
This is a $G$-equivariant map. 

\begin{proposition}\label{prop:psi_f S isom}
The following conditions are equivalent. 
\begin{enumerate}
\item The map $\psi_{f,S}$ is an isomorphism. 
\item All roots $\xi_1,\ldots,\xi_d$ of $f$ form a $\Z_S$-basis of $O_{L,S}$. 
\item $d=[L:\Q]$ and $\det\bigl(\Tr(\xi_i\xi_j)\bigr)_{i,j}\in\Z_S^\times$.
\end{enumerate}
\end{proposition}
\begin{proof}
It is obvious that (1) and (2) are equivalent. 

Let us show the equivalence of (2) and (3). 
Since each of them includes the condition $d=[L:\Q]$, we may assume it. 
If we put $X=O_{L,S}$ and $X'=\sum_{j=1}^d \Z_S\cdot\xi_j$, we have $X'\subset X$ and 
\[X'=X\iff O_{L,S}\cdot\det\bigl(\Tr(\xi_i\xi_j)\bigr)_{i,j}
=O_{L,S}\cdot\disc(L)\ \bigl(=O_{L,S}\bigr),\]
cf.~\cite[Chapter III, Corollary to Proposition 5]{Serre}. 
Therefore, the condition (2) holds if and only if $d=[L:\Q]$ and 
$\det\bigl(\Tr(\xi_i\xi_j)\bigr)_{i,j}$ is invertible in $O_{L,S}$. 
Since $\det\bigl(\Tr(\xi_i\xi_j)\bigr)_{i,j}$ obviously belongs to $\Z_S$, 
the latter condition is equivalent to (3). 
\end{proof}

\begin{remark}\label{rem:S123}
Given a finite Galois extension $L/\Q$, we have introduced the following conditions on a finite set $S$ of primes and a monic polynomial $f\in \Q[x]$ of degree $d$ with $d$ distinct roots $\xi_1,\ldots,\xi_d$ in $L$:
\begin{enumerate}
\item[(S1)] $S$ contains all primes ramifying in $L/\Q$. 
\item[(S2)] $f$ belongs to $\Z_S[x]$
and $\disc(f)\in\Z_S^\times$. 
\item[(S3)] $d=[L:\Q]$ and $\det\bigl(\Tr(\xi_i\xi_j)\bigr)_{i,j}\in\Z_S^\times$. 
\end{enumerate}
The conditions (S1) and (S2) already appear in Theorem \ref{thm:exceptional prime}.
The role of each condition is as follows.
(S1) ensures that $\varphi_S$ is an isomorphism and also is needed to define the map $\ev_S$. 
(S2) allows us to define the map $\psi_{f,S}$, and then, the new condition (S3) is necessary and sufficient 
for $\psi_{f,S}$ to be an isomorphism. 

For any finite Galois extension $L$ over $\Q$, 
one can construct a pair $(S,f)$ satisfying all of these conditions as follows. 
First let $f$ be the minimal polynomial of a normal basis $\xi_1,\ldots,\xi_d$ of $L/\Q$. 
Then let $S$ be the set of primes which ramify in $L$, divide the denominator of a coefficient of $f$, divide the numerator of $\disc(f)$, or divide the numerator of $\det\bigl(\Tr(\xi_i\xi_j)\bigr)_{i,j}$.
\end{remark}

\subsection{$S$-integers in $\cP^\cA_L$}

As in the previous subsection, let $S$ be a finite set of primes containing all ones ramifying in $L/\Q$. 
By taking the $G$-invariant part of 
\[O_{L,S}\otimes O_{L,S}\overset{\varphi_S}{\longrightarrow}\Fun(G,O_{L,S})
\overset{\ev_S}{\longrightarrow}\cA_{L,S},\]
we obtain $\Z_S$-algebra homomorphisms 
\[(O_{L,S}\otimes O_{L,S})^G
\overset{\varphi_S}{\longrightarrow}A(L,S)\coloneqq\Fun(G,O_{L,S})^G
\overset{\ev_S}{\longrightarrow}(\cA_{L,S})^G\cong\cA_{S}.\]
The last isomorphism $(\cA_{L,S})^G\cong\cA_{S}$ is shown in the same way as 
Proposition \ref{prop:cA_L^G}. 

\begin{definition}
For a finite Galois extension $L$ over $\Q$ and a finite set $S$ of primes containing 
all ones ramifying in $L/\Q$, we define a $\Z_S$-subalgebra $\cP^\cA_{L,S}$ of $\cA_{L,S}$ by 
\[\cP^\cA_{L,S}\coloneqq \ev_S\bigl(A(L,S)\bigr). \]
\end{definition}

From the commutative diagram 
\[\xymatrix{
A(L,S) \ar[r]^-{\ev_S}_-{\cong} \ar@{^{(}->}[d] & \cP^\cA_{L,S} \ar@{^{(}->}[r] \ar@{->}[d] & 
\cA_S \ar@{->>}[d] \\
A(L) \ar[r]^-{\ev}_-{\cong} & \cP^\cA_L \ar@{^{(}->}[r] & \cA
}\]
we see that the image of $\cP^\cA_{L,S}$ under the projection $\cA_S\to\cA$ 
is contained in $\cP^\cA_L$, and the induced map $\cP^\cA_{L,S}\to\cP^\cA_L$ is an injection. 
Therefore, we may view $\cP^\cA_{L,S}$ as a subring of $\cP^\cA_L$. 
We call it \emph{the ring of $S$-integers} in $\cP^\cA_L$. 

As in the case of finite algebraic numbers, every $S$-integer in $\cP_L^{\cA}$ is obtained from an $S$-integral sequence in 
$\Rec(f;\Z_S)$ for some $f\in \Z_S[x]$.
To see this, we first notice that, if $f(x)\in \Q[x]$ is a monic polynomial having distinct roots in $L$ and satisfying the condition (S2) in Remark \ref{rem:S123}, 
we have a map 
\[\psi_{f,S}\colon \Rec(f;\Z_S)\longrightarrow (O_{L,S}\otimes O_{L,S})^G, \]
which is the $G$-invariant part of $\psi_{f,S}\colon \Rec(f;O_{L,S})\to O_{L,S}\otimes O_{L,S}$. 
Then, similarly to the case over $\Q$ (cf.~Theorem \ref{thm:r_f vs P_L^A}), one can observe that 
the composition $\ev_S\circ\varphi_S\circ\psi_{f,S}$ is equal to the map 
\[r_{f,S}\colon \Rec(f;\Z_S)\longrightarrow\cP^\cA_{L,S};\ (a_m)_m\longmapsto (a_p\bmod p)_p. \]
Further, if $S$ satisfies the condition (S3) in Remark \ref{rem:S123}, then the above map $r_{f,S}$ is an isomorphism.
As a result, we obtain the $S$-integral refinement of Theorem \ref{thm:r_f vs P_L^A}.

\begin{theorem}\label{thm:S-refinement}
Let $S$ be a finite subset of primes and $f\in \Q[x]$ a monic polynomial of degree $d$ with $d$ distinct roots in $L$.
\begin{itemize}
\item[(1)] If $S$ and $f$ satisfy the conditions (S1) and (S2) in Remark \ref{rem:S123}, then we have that $r_{f,S}=\ev_S\circ\varphi_S\circ\psi_{f,S}$.
In particular, $r_{f,S}\big(\Rec(f;\Z_S)\big) \subset \cP_{L,S}^{\cA}$.
\item[(2)] If $S$ and $f$ satisfy the conditions (S1) to (S3) in Remark \ref{rem:S123}, then the map $r_{f,S}:\Rec(f;\Z_S)\rightarrow \cP_{L,S}^{\cA}$ is an isomorphism.
\end{itemize}
\end{theorem}

\subsection{Decomposition Laws of Primes with Recurrent Sequences}
In this subsection, we discuss the determination problems of exceptional primes in our decomposition law of primes in $L$.

Let us prove Theorem \ref{thm:exceptional prime}.
We notice that for each $\varrho\in G$, the function $e_\varrho$ defined in \eqref{eq:def_e_rho}
lies in $A(L,S)$ for any $S$. 
Denote its image under the map $\ev_S$ by $e^{\cA}_{\varrho,S}\in\cP^\cA_{L,S}$. 
This element $e^{\cA}_{\varrho,S}$ coincides with $e^\cA_\varrho=\ev(e_\varrho)\in\cP^\cA_L$ studied in \S3 
under the inclusion $\cP^\cA_{L,S}\subset\cP^\cA_L$ mentioned above. 

\begin{proof}[Proof of Theorem \ref{thm:exceptional prime}]
From the assumption on $S$ and $f$, we have $r_{f,S}\big(\Rec(f;\Z_S)\big) \subset \cP_{L,S}^{\cA}$.
Since $\ba=(a_m)_m \in \Rec(f;\Z_S)$, we obtain the identity $e^{\cA}_{\varrho,S}=r_{f,S}(\ba)$ in $\cP^\cA_{L,S}$ for any $\rho \in C$, from which the desired result follows.
\end{proof}

Below, we write $e^{\cA}=e^{\cA}_{\rm id,S}$ and consider the equation 
\begin{equation}\label{eq:e^A_S} 
e^{\cA}=r_{f,S}(\boldsymbol{a})
\end{equation}
with $\ba\in\Rec(f;\Z_S)$ to get a characterization of the splitting primes in $L/\Q$. 
In some cases, it is convenient to work with a polynomial $f$ 
which is \emph{not} the minimal polynomial of a normal element (namely, use Theorem \ref{thm:S-refinement} (1)). 
Though $\psi_f$ is not an isomorphism for such $f$, 
it is sometimes possible to give an explicit set $S$ satisfying (S1) and (S2) in Remark \ref{rem:S123} 
and an explicit solution to \eqref{eq:e^A_S}. 

As examples of solutions to \eqref{eq:e^A_S}, we deal with two families: the cyclotomic field $\Q(\zeta_N)$ and the field $\Q(\zeta_N,{\sqrt[N]{M}})$, where $\zeta_N$ is a primitive $N$-th root of unity.

\begin{example}[Cyclotomic fields $\Q(\zeta_N)$]
Let $N$ be a positive integer and $\Q(\zeta_N)$ be the cyclotomic field. 
Then 
\[S=\{p\in\bP_\Q\mid \text{$p$ divides $N$}\}\]
satisfies the condition (S1). Moreover, let $f(x)=x^N-1$.
We have
\begin{align*}
\disc(f)&=\prod_{0\le i<j\le N-1}(\zeta_N^i-\zeta_N^j)^2
=(-1)^{N(N-1)/2}\prod_{\substack{0\le i,j\le N-1\\ i\ne j}}(\zeta_N^i-\zeta_N^j)\\
&=(-1)^{N(N-1)/2} \zeta_N^{N(N-1)/2}\Biggl(\prod_{k=1}^{N-1}(1-\zeta_N^k)\Biggr)^N
=(-1)^{N(N-1)/2} N^N, 
\end{align*}
since 
\[\prod_{k=1}^{N-1}(1-\zeta_N^k)=\frac{x^N-1}{x-1}\bigg|_{x=1}=N. \]
Thus the condition (S2) also holds for $S$ and $f$. 

Now let $\ba=(a_m)_m$ be the sequence given by 
\[a_m\coloneqq\begin{cases}
1 & (m\equiv 1\mod N), \\
0 & (m\not\equiv 1\mod N), 
\end{cases}\]
which obviously belongs to $\Rec(f;\Z_S)$. 
As an element of $\Rec(f;O_{L,S})$, it is also written as 
\[a_m=\frac{1}{N}\sum_{j=0}^{N-1}\zeta_N^{(m-1)j}
=\frac{1}{N}\sum_{j=0}^{N-1}\zeta_N^{-j}(\zeta_N^j)^m. \]
Hence we have 
\[\psi_{f,S}(\ba)=\frac{1}{N}\sum_{j=0}^{N-1}\zeta_N^{-j}\otimes \zeta_N^j,\qquad 
\varphi_S\circ\psi_{f,S}(\ba)(\tau)=\frac{1}{N}\sum_{j=0}^{N-1}\zeta_N^{-j}\tau(\zeta_N^j)
\quad (\tau\in G). \]
Now recall that each $\tau\in G$ is characterized by $\tau(\zeta_N)=\zeta_N^a$ 
with a unique $a\in(\Z/N\Z)^\times$. 
Then 
\[\varphi_S\circ\psi_{f,S}(\ba)(\tau)=\frac{1}{N}\sum_{j=0}^{N-1}\zeta_N^{-j}\zeta_N^{aj}
=\frac{1}{N}\sum_{j=0}^{N-1}\zeta_N^{(a-1)j}
=\begin{cases}
1 & (a=1\text{ in }(\Z/N\Z)^\times),\\
0 & (\text{otherwise}).
\end{cases}\]
Since $a=1$ in $(\Z/N\Z)^\times$ means that $\tau=\mathrm{id}$ in $G$, 
we see that $\varphi_S\circ\psi_{f,S}(\ba)=e_\mathrm{id}$, 
and $r_{f,S}(\ba)=e^\cA$ in $\cP^\cA_{L,S}$. 
Therefore, we obtain the equivalence 
\[\text{$p$ splits completely in $\Q(\zeta_N)$}\iff a_p\equiv 1\pmod{p}
\iff p\equiv 1\pmod{N}\]
for all primes $p$ not dividing $N$, 
which is a well-known result in the theory of cyclotomic fields. 
\end{example}


\begin{example}[The fields $\Q(\zeta_N,{\sqrt[N]{M}})$]
Let $N$ be a positive integer and $M$ an integer for which the field 
$L=\Q(\zeta_N,\sqrt[N]{M})$ has degree $N$ over $\Q(\zeta_N)$. 
Note that this includes {the field considered} in Proposition \ref{prop:S_3_intro}. 

We set 
\begin{align*}
S&=\{p\in\bP_\Q\mid \text{$p$ divides $N$, $M$ or $M^j-1$ for some $j=1,\ldots,N-1$}\},\\
f(x)&=\prod_{j=0}^{N-1}(x^N-M^j). 
\end{align*}
The condition (S1) holds since only the prime divisors of $NM$ ramify in $L$. 
Moreover, after some computations, we obtain that 
\[\disc(f)
=N^{N^2}\cdot M^{N(N-1)^2/2}\cdot\prod_{\substack{0\le j,j'\le N-1\\ j\ne j'}}(M^j-M^{j'})^N, \]
which implies the condition (S2). 

The roots of $f$ are written as $\zeta_N^i M^{j/N}$ with $i,j=0,\ldots,N-1$. 
The Galois group is given by $G=\{\tau_{a,b}\mid a\in(\Z/N\Z)^\times,\,b=0,\ldots,N-1\}$, 
where 
\[\tau_{a,b}(\zeta_N)=\zeta_N^a,\quad \tau_{a,b}(M^{1/N})=\zeta_N^bM^{1/N},\text{ so }
\tau_{a,b}(\zeta_N^i M^{j/N})=\zeta_N^{ai+bj} M^{j/N}. \]

Now we define the sequence $\ba=(a_m)_m$ by 
\begin{align*}
a_m&=\frac{1}{N^2}\sum_{i,j=0}^{N-1}\zeta_N^{(m-1)i}M^{(m-1)j/N}
=\frac{1}{N^2}\sum_{i,j=0}^{N-1}\zeta_N^{-i}M^{-j/N}(\zeta_N^i M^{j/N})^m\\
&=\begin{cases}
\frac{1}{N}\sum_{j=0}^{N-1}M^{j(m-1)/N} & (m\equiv 1\mod{N}), \\
0 & (m\not\equiv 1\mod{N}). 
\end{cases}
\end{align*}
This belongs to $\Rec(f;\Z_S)$, and satisfies 
\begin{align*}
\varphi_S\circ\psi_{f,S}(\ba)(\tau_{a,b})
&=\frac{1}{N^2}\sum_{i,j=0}^{N-1}\zeta_N^{-i}M^{-j/N}\tau_{a,b}(\zeta_N^i M^{j/N})
=\frac{1}{N^2}\sum_{i,j=0}^{N-1}\zeta_N^{-i}M^{-j/N}\cdot\zeta_N^{ai+bj} M^{j/N}\\
&=\frac{1}{N^2}\sum_{i,j=0}^{N-1}\zeta_N^{(a-1)i+bj}
=\begin{cases}
1 & ((a,b)=(1,0)),\\
0 & ((a,b)\ne (1,0)). 
\end{cases}
\end{align*}
In other words, the identity $e^\cA=r_{f,S}(\ba)$ holds in $\cP^\cA_{L,S}$. 
From this, we again reproduce the well-known result 
\[\text{$p$ splits completely in $L$}\iff \text{$p\equiv 1\bmod N$ and 
$M$ is an $N$-th power residue modulo $p$}\]
for $p\notin S$, since 
\[a_p=\frac{1}{N}\sum_{j=0}^{N-1}\bigl(M^{(p-1)/N}\bigr)^j
\equiv\begin{cases}
1 & (M^{(p-1)/N}\equiv 1\mod{p}), \\
0 & (\text{otherwise}) 
\end{cases}\]
for any prime $p\equiv 1\mod{N}$. 
\end{example}

\section*{Acknowledgments}
This work is partially supported by
JSPS KAKENHI Grant Number 18K03233, 18H01110, 18H05233, 20K14294 and 21K03185.
We are grateful to Prof.~Tomokazu Kashio and Ryutaro Sekigawa for answering our questions about cyclic cubic extension.


\end{document}